\newif\iffiif
%\fiiftrue % ONLY FIIF
\fiiffalse % NOT FIIF

\iffiif
\documentclass[]{fiif}
\else
\documentclass[b5paper]{amsart}
\fi

\usepackage[utf8]{inputenc}
\usepackage{lmodern}
\usepackage[T1]{fontenc}

\setcounter{topnumber}{1}

\iffiif
\else
\usepackage[twoside,hmargin={0.575in,0.575in},vmargin={0.4in,0.5in},includehead]{geometry} % b5 % NOT FIIF
\usepackage{amsthm} % NOT FIIF
\def\theenumi{\roman{enumi}} % NOT FIIF
\fi

\usepackage{amsmath,amssymb,mathtools,bm,textcomp,stmaryrd} % math
\usepackage{booktabs,array} % tables
\usepackage[shrink=50,stretch=20]{microtype} % fonts
\usepackage{graphicx,xcolor} % graphics, colors

\newif\ifTIKZ
%\TIKZtrue
\ifTIKZ
  \usepackage{tikz} % TikZ
  \usetikzlibrary{arrows, arrows.meta, cd, hobby, positioning, automata}
  \usetikzlibrary{external}
  \tikzexternalize
  \newcommand\TIKZinclude[1]{\tikzsetnextfilename{#1}\input{#1.tex}}
\else
  \makeatletter
  \def\Xincludegraphics#1{\begingroup\leavevmode\Xpgfexternalreaddpth{#1}\setbox1=\hbox{\includegraphics{#1}}%
    \ifdim\Xpgfretval=0pt \box1 \else\dimen0=\Xpgfretval\relax\hbox{\lower\dimen0 \box1 }\fi\endgroup}
  \newread\Xr@pgf@reada
  \def\Xpgfexternalreaddpth#1{\edef\Xpgfexternalreaddpth@restore{\noexpand\endlinechar=\the\endlinechar\space
    \noexpand\catcode`\noexpand\@=\the\catcode`\@\space}\def\Xpgfretval{0pt}\endlinechar=-1 \catcode`\@=11 %
    \openin\Xr@pgf@reada=#1.dpth \Xpgfincludeexternalgraphics@read@dpth\Xpgfexternalreaddpth@restore}
  \def\Xpgfincludeexternalgraphics@read@dpth{\ifeof\Xr@pgf@reada\closein\Xr@pgf@reada\else\read\Xr@pgf@reada
    to\Xpgfincludeexternalgraphics@auxline\ifx\Xpgfincludeexternalgraphics@auxline\empty\else
    \expandafter\Xpgfincludeexternalgraphics@read@dpth@line\Xpgfincludeexternalgraphics@auxline
    \Xpgfincludeexternalgraphics@read@dpth@line@EOI\fi\expandafter\Xpgfincludeexternalgraphics@read@dpth\fi}
  \long\def\Xpgfincludeexternalgraphics@read@dpth@line#1#2\Xpgfincludeexternalgraphics@read@dpth@line@EOI{%
    \ifcat\noexpand#1\relax\if@filesw{\toks0={#1#2}\immediate\write\@auxout{\noexpand\def\noexpand\Xdpthimport{%
    \the\toks0 }\noexpand\Xdpthimport }}\fi\else\def\Xpgfretval{#1#2}\fi}%
  \makeatother
  \newcommand\TIKZinclude[1]{\Xincludegraphics{#1}}
\fi

% math symbols and basic macros
\DeclarePairedDelimiterX\set[2]\lbrace\rbrace{\,#1\suchthat#2\,} % set defined through a condition
\DeclarePairedDelimiter\qfl\lfloor\rfloor % integer floor function
\DeclarePairedDelimiter\abs\lvert\rvert % absolute value
\DeclareMathOperator\omod{mod} % "mod" as an operator
\newcommand*\f{\mathrm{f}} % "f" used as a~subscript
\newcommand*\N{\mathbb{N}} % natural numbers
\newcommand*\R{\mathbb{R}} % real numbers
\newcommand*\Q{\mathbb{Q}} % rational numbers
\newcommand*\Z{\mathbb{Z}} % integers
\newcommand*\K{\mathbb{K}} % representation space
\newcommand*\Kf{K_\f} % finite repr. space
\newcommand*\frakp{\mathfrak{p}} % place
\newcommand*\RR{\mathcal{R}} % "classical" beta-tile
\newcommand*\QQ{\mathcal{Q}} % beta-tile
\newcommand*\OO{\mathcal{O}} % ring of integers
\newcommand*\XX{\mathcal{X}} % natural extension
\newcommand*\TT{\mathcal{T}} % n. e. transformation
\newcommand*\BB{\mathcal{B}} % another alphabet
\AtBeginDocument{
  \let\h\relax % undefine \h
  \let\AA\relax % undefine \AA
  \newcommand*\h{{\bm{h}}} % Hensel expansion map
  \newcommand*\AA{\mathcal{A}} % alphabet
}
\newcommand*\defined[1]{\emph{#1}} % for defined terms
\newcommand*\suchthat{:} % alias (colon for set definitions)
\newcommand*\inverselim{\varprojlim} % alias (inverse limit)
\newcommand*\eqdef{\coloneqq} % alias (definition equals sign)
 % alias (definition equals sign the other way around)
 % alias (limit arrow with overset)
\newcommand*\OL[1]{\overline{#1}} % alias (space closure)
\let\ol\relax
\newcommand*\divides{\mathrel{|}} % vertical bar with correct spacing for "a|b"
\newcommand*\ndivides{\mathrel{\nmid}} % vertical bar with correct spacing for "a|b"
\newcommand*\s{\raise0.2ex\hbox{\scriptsize$\star$}} % nice small star

\newcommand*\tight[1]{{\mkern1mu\relax#1\mkern1mu}}

% roman and Alph numbering of enumerate
\iffiif
 % ONLY FIIF
\def\Alphenumi{\def\theenumi{(\Alph{enumi})}} % ONLY FIIF
\else
 % NOT FIIF
\def\Alphenumi{\def\theenumi{\Alph{enumi}}} % NOT FIIF
\fi

% notation for prefixes
 % set of prefixes
\def\xPref[#1]#2{#2\begingroup\llbracket\thinmuskip10mu\relax#1\rrbracket\endgroup}
\DeclareRobustCommand\Pref{\xPref}

% notation for periodic words
\makeatletter
\def\per@skip{1mu}
\def\perX*#1{{#1}^\omega}
\def\perY#1{{(#1)}^\omega}
\DeclareRobustCommand\per{\@ifnextchar*\perX\perY}
\makeatother

\makeatletter
\usepackage{etoolbox}
\iffiif % ONLY FIIF BEGIN
  \let\qedhere\relax
  \abovedisplayskip 6pt plus 3pt minus 1pt
  \abovedisplayshortskip 3pt plus 2pt minus 1pt
  \widowpenalty6000
  \clubpenalty6000
  
\else % ONLY FIIF END % NOT FIIF BEGIN
  \let\xproof\proof
  
  \def\yyproof[#1]{\xproof[Proof #1]}
  \def\yproof{\@ifnextchar[\yyproof\xproof}
  \let\proof\yproof
  
\fi % NOT FIIF END
\makeatother

\numberwithin{equation}{section}

\makeatletter
\def\th@plain{\slshape}
\makeatother
\newtheorem{lemma}{Lemma}[section]
\newtheorem{proposition}[lemma]{Proposition}

\newtheorem{theorem}{Theorem} % for our results; separate numbering, not section-wise
\newtheorem{Theorem}{Theorem} % for cited results; Alph numbering, not section-wise either

\iffiif
\let\newtheorem\newnumbered % ONLY FIIF
\else
\theoremstyle{definition} % NOT FIIF
\fi
\newtheorem{definition}[lemma]{Definition}
\newtheorem{example}[lemma]{Example}

\newtheorem{remark}[lemma]{Remark}

\usepackage{hyperref} % should be loaded last
\hypersetup{
    colorlinks,
    linkcolor={red!50!black},
    citecolor={green!50!black},
    urlcolor={blue!90!black}
}

\title[Beta-expansions of rational numbers]{Beta-expansions of rational numbers in~quadratic~Pisot~bases}
% Beta-expansions of rational numbers in quadratic Pisot bases

\iffiif % ONLY FIIF BEGIN
\author{Tom\'a\v s Hejda and Wolfgang Steiner}
\classno{11A63 (11R06 37B10)}
\extraline{%
The first author acknowledges support by the Grant Agency of the Czech Technical University in Prague grant SGS11/162/OHK4/3T/14
 and the Czech Science Foundation grant 13-03538S.
The second author acknowledges support by the ANR/FWF project ``FAN -- Fractals and Numeration'' (ANR-12-IS01-0002, FWF grant I1136)
 and by the ANR project ``DYNA3S'' (ANR-13-BS02-0003).}
\else % ONLY FIIF END % NOT FIIF BEGIN
\author[T. Hejda]{Tom\'a\v s Hejda} % NOT FIIF
\address{Dept.\@ Math.\@ FNSPE, Czech Technical University in Prague, Trojanova~13, Prague 12000, Czech Rep.
\newline\hspace*{\parindent}%
LIAFA, CNRS UMR 7089, Universit\'e Paris Diderot -- Paris 7, Case 7014, 75205 Paris Cedex~13, France} % NOT FIIF
\email{tohecz@gmail.com} % NOT FIIF
\author[W. Steiner]{Wolfgang Steiner} % NOT FIIF
\address{LIAFA, CNRS UMR 7089, Universit\'e Paris Diderot -- Paris 7, Case 7014, 75205 Paris Cedex~13, France} % NOT FIIF
\email{steiner@liafa.univ-paris-diderot.fr} % NOT FIIF
\fi % NOT FIIF END

\begin{document}

\nonfrenchspacing

\def\bar#1{{\color{red}#1}}

\iffiif
\maketitle % ONLY FIIF
\fi

\begin{abstract}
We study rational numbers with purely periodic Rényi $\beta$-expansions.
For bases $\beta$ satisfying $\beta^2=a\beta+b$ with $b$ dividing $a$, we give a~necessary and sufficient condition for $\gamma(\beta)=1$,
 i.e., that all rational numbers $p/q\in[0,1)$ with $\gcd(q,b)=1$ have a~purely periodic $\beta$-expansion.
A simple algorithm for determining the value of $\gamma(\beta)$ for all quadratic Pisot numbers $\beta$
 is described.
\end{abstract}

\iffiif\else
\maketitle % NOT FIIF
\fi

\section{Introduction and main results}

Rényi $\beta$-expansions~\cite{renyi_1957} provide a~very natural generalization of standard positional
 numeration systems such as the decimal system.
Let $\beta>1$ denote the base.
Expansions of numbers $x\in[0,1)$ are defined in terms of the $\beta$-transformation
\[
	T\colon[0,1)\to[0,1),\; x\mapsto \beta x-\qfl{\beta x}
.\]
The expansion of $x$ is the infinite string $x_1 x_2 x_3\cdots$ where $x_j\eqdef \qfl{\beta T^{j-1}x}$.
For $\beta\in\N$, we recover the standard expansions in base $\beta$
 and the $\beta$-expansion of $x\in[0,1)$ is eventually periodic
 (i.e., there exist $p,n$ such that $x_{k+p}=x_k$ for all $k\geq n$) if and only if $x\in\Q$.
This result was generalized to all Pisot bases by Schmidt~\cite{schmidt_1980},
 who proved that for a~Pisot number $\beta$ the expansion of $x\in[0,1)$ is eventually periodic
 if and only if $x$ is an element of the number field $\Q(\beta)$.
Moreover, he showed that when $\beta$ satisfies $\beta^2=a\beta+1$, then each $x\in[0,1)\cap\Q$
 has a~purely periodic $\beta$-expansion.

Akiyama~\cite{akiyama_1998} showed that if $\beta$ is a~Pisot unit satisfying a~certain finiteness property then
 there exists $c>0$ such that all rational numbers $x\in\Q\cap[0,c)$ have a~purely periodic expansion.
If $\beta$ is not a~unit, then a~rational number $p/q\in[0,1)$ can have a~purely periodic expansion
 only if $q$ is co-prime to the norm $N(\beta)$.
Many Pisot non-units satisfy that there exists $c>0$ such that all rational numbers $\frac pq\in[0,c)$
 with $q$ co-prime to $b$ have a purely periodic expansion.
This stimulates for the following definition:

\begin{definition}\label{def:gamma}
Let $\beta$ be a~Pisot number, and let $N(\beta)$ denote the norm of~$\beta$.
Then we define $\gamma(\beta)\in[0,1]$ as the maximal $c$ such that all $\frac pq\in\Q\cap[0,c)$
 with $\gcd(q,N(\beta))=1$ have a~purely periodic $\beta$-expansion.
In other words,
\[
	\gamma(\beta)\eqdef \inf\set[\big]{\tfrac pq\in\Q\cap[0,1)}{\gcd(q,b)=1,\, \tfrac pq \text{ has a not purely periodic expansion}}\cup\{1\},
\]
 where $b=\abs{N(\beta)}$ is the norm of $\beta$.
\end{definition}

The question is how to determine the value of $\gamma(\beta)$.
As well, knowing when $\gamma(\beta)=0$ or $1$ is of big interest.
Values of $\gamma(\beta)$ for whole classes of numbers as well as for particular numbers
 have been given \cite{akiyama_1998,ABBS_2008,akiyama_scheicher_2005,minervino_steiner_2014,schmidt_1980}.

It is easy to observe that the expansion of $x$ is purely periodic if and only if $x$ is a~periodic point of $T$, i.e.,
 there exists $p\geq1$ such that $T^px=x$.
The natural extension $(\XX,\TT)$ of the dynamical system $([0,1),T)$ (w.r.t.\@ its unique absolutely continuous invariant measure)
 can be defined in an algebraic way, cf.~\S\ref{subsect:tiles}.
Several authors contributed to proving the following result:
A~point $x\in[0,1)$ has a purely periodic $\beta$-expansion if and only if $x\in\Q(\beta)$ and its diagonal embedding
 lies in the natural extension domain $\XX$.
The quadratic unit case was solved by Hama and Imahashi~\cite{hama_imahashi_1997},
 the confluent unit case by Ito and Sano~\cite{ito_sano_2001,ito_sano_2002}.
Then Ito and Rao~\cite{ito_rao_2005} resolved the unit case completely using an algebraic argument.
For non-unit bases~$\beta$, one has to consider finite ($p$-adic) places of the field $\Q(\beta)$.
This consideration allowed Berthé and Siegel~\cite{berthe_siegel_2007} to expand the result to all (non-unit) Pisot numbers.

The first values of $\gamma(\beta)$ for two particular quadratic non-units were provided by Akiyama et al.~\cite{ABBS_2008}.
Recently, Minervino and Steiner~\cite{minervino_steiner_2014} described the boundary of $\XX$ for quadratic non-unit Pisot bases.
This allowed them to find the value of $\gamma(\beta)$ for an infinite class of quadratic numbers:

\begin{Theorem}[{\cite{minervino_steiner_2014}}]\label{thm:A}
Let $\beta$ be the positive root of $\beta^2=a\beta+b$ for $a\geq b\geq1$ two co-prime integers.
Then
\[
	\gamma(\beta)=\begin{cases}
		1-\frac{(b-1)b\beta}{\beta^2-b^2} & \text{if $a>b(b-1)$,}
	\\
		0 & \text{otherwise.}
	\end{cases}
\]
In particular, $\gamma(\beta)=1$ if and only if $b=1$.
\end{Theorem}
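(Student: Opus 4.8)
The plan is to reduce the statement to a membership condition in the natural extension domain $\XX$ and then read off the threshold from its boundary. By the cited characterization, a rational $\frac pq\in[0,1)$ with $\gcd(q,b)=1$ has a purely periodic expansion if and only if its diagonal embedding lies in $\XX$. The decisive simplification for rational inputs is that the Archimedean conjugate of $\frac pq$ is $\frac pq$ itself, so the embedding is $\bigl(\frac pq,\,\frac pq,\,\iota_b(\tfrac pq)\bigr)$: it is pinned to the diagonal in the two Archimedean coordinates, and the only remaining freedom sits in the $b$-adic factor $\Z_b=\prod_{\frakp\mid b}\K_\frakp$. First I would record that, since $\gcd(a,b)=1$, every prime $p\mid b$ satisfies $x^2-ax-b\equiv x(x-a)\pmod p$ with distinct roots, hence splits in $\Q(\beta)$, with $\beta$ a $b$-adic unit at one place and of positive valuation at the other; in particular $\iota_b(\frac pq)$ lands in $\Z_b$ and is unconstrained there.

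Second I would invoke equidistribution: for $\frac pq$ with $\gcd(q,b)=1$ and real value tending to a fixed $x\in[0,1)$, the $b$-adic coordinates $\iota_b(\frac pq)$ are dense in $\Z_b$ (any target residue is attainable by the Chinese remainder theorem while keeping $\frac pq$ close to $x$). Consequently all such rationals near $x$ are purely periodic exactly when the entire $b$-adic fibre over the diagonal point lies in $\XX$, so that
\[
	\gamma(\beta)=\inf\Bigl(\set[\big]{x\in[0,1)}{\{x\}\times\{x\}\times\Z_b\not\subseteq\XX}\cup\{1\}\Bigr).
\]
In particular the degenerate value $\gamma(\beta)=0$ must arise precisely when this fibre fails to be covered already for arbitrarily small $x>0$ — a failure located near the origin rather than near $1$, which is why it is detected by a size condition on $a$ relative to $b$ and not by proximity to $1$.

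Third, and this is the crux, I would feed in the explicit description of $\partial\XX$ for quadratic non-unit Pisot bases to locate this infimum. Over the base interval the fibre of $\XX$ decomposes according to the admissible digits in $\{0,\dots,a\}$, constrained by the quasi-greedy expansion $\per{a(b-1)}$ of $1$, into pieces whose conjugate projections are intervals matched with $b$-adic balls; the full-fibre condition on the diagonal should break down first where the conjugate coordinate reaches the upper boundary of a piece that is not yet $b$-adically complete. I expect this critical configuration to correspond to the purely periodic point $v$ with expansion $\per{(b-1)0}$, for which $\sigma(v)=(b-1)\frac{\beta'}{\beta'^2-1}$, and hence $\gamma(\beta)=1-\sigma(v)$; substituting $\beta'=-b/\beta$ turns $\sigma(v)$ into $\frac{(b-1)b\beta}{\beta^2-b^2}$ and yields the closed form.

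Finally I would settle the dichotomy by a sign analysis. Using $\beta^2=a\beta+b$ one rewrites $\beta^2-b^2-(b-1)b\beta$ as $\beta\,\bigl(a-b(b-1)\bigr)-b(b-1)$, which is positive exactly when $a>b(b-1)$ (using $\beta>b$ together with $a-b(b-1)\ge 1$ in that regime); there the closed form lies in $(0,1)$ and equals $\gamma(\beta)$, whereas for $a\le b(b-1)$ the fibre condition already fails near $0$ and $\gamma(\beta)=0$. Since the subtracted term $\frac{(b-1)b\beta}{\beta^2-b^2}$ vanishes if and only if $b=1$, this also gives $\gamma(\beta)=1\iff b=1$. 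The main obstacle throughout is the third step: extracting from $\partial\XX$ both the exact extremal sequence $\per{(b-1)0}$ and a proof that below the threshold the whole $b$-adic fibre over the diagonal is genuinely covered, which is where the fine geometry of $\XX$, and not merely its Archimedean shadow, is indispensable.
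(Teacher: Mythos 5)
Your skeleton coincides with the one that actually underlies this theorem (and the present paper's Theorem~\ref{thm:gamma-bar}, which subsumes it): reduce pure periodicity to $\delta(x)\in\XX$ via Theorem~\ref{thm:Pur-XX}, use density of the finite-place embeddings of $\Z_b$ (Lemma~\ref{lem:closure}) to turn $\gamma(\beta)$ into a statement about whole fibres of $\XX$ over diagonal points, and read the threshold off $\partial\QQ(0)$ and $\partial\QQ(\beta-a)$. Your closed form is correct, the conjugate of the point with expansion $\per{(b{-}1)0}$ is indeed the right quantity, and the sign analysis $\beta^2-b^2-(b-1)b\beta=\beta\bigl(a-b(b-1)\bigr)-b(b-1)$ (positive iff $a>b(b-1)$, using $\beta>a$) is fine.

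The gap is the step you yourself flag as the obstacle, and it is where all the content lives. Three things are missing. (i) The extremal boundary point is guessed, not derived. The actual argument parametrizes the boundary by the boundary graph of the tiles, yielding $\partial^+\QQ(0)=\partial^+\QQ(\beta-a)=\delta'(1)+\set{P_{\bm u}(\delta'(\beta))}{\bm u\in\AA^\omega}$, so the right end of the purely periodic window is $r_0=1+\inf\set{P_{\bm u}(\beta')}{\bm u\in\h_\f(\Z_\f)}$; one must then show that for $a\perp b$ the constraint $\bm u\in\h_\f(\Z_\f)$ is vacuous, i.e.\ that $\Z$ is dense in $\OL{\Z[\beta]_\f}$, so that the infimum collapses to the free minimum $P_{\per{0(b{-}1)}}(\beta')=\frac{(b-1)\beta'}{1-(\beta')^2}=-\frac{(b-1)b\beta}{\beta^2-b^2}$. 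Your splitting/CRT remarks point in this direction but you never make the link to attainability of the extremal digit string; this link is exactly what fails when $\gcd(a,b)>1$ (e.g.\ for $\beta^2=2\beta+2$ the free minimum would give $\gamma=0$, whereas $\gamma\approx0.9148$), so any argument that does not use it in an essential way proves too much. (ii) You track only the right endpoints of the fibre sections; the case analysis \eqref{eq:gamma-Y} also requires the left endpoints, i.e.\ controlling $\sup_{j}P_{\h(j-\beta)}(\beta')$ to exclude $l_0>0$ and the $\beta-a$ degeneracies — this is what the hypothesis $a\perp b$ (or $a>\frac{1+\sqrt5}{2}b$) buys in Theorem~\ref{thm:gamma0} and is absent from your proposal. (iii) The assertion that below the threshold the entire fibre is covered (so that \emph{all} admissible rationals there are purely periodic, not merely that none fail near the boundary point you identified) is stated as an expectation, not proved. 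Until (i)--(iii) are supplied, the proposal is a correct reduction plus a correct guess, not a proof.
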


The purpose of this article is to generalize Theorem~\ref{thm:A} to all quadratic Pisot numbers with norm $N(\beta)<0$.
(Note that when $N(\beta)>0$, then $\beta$ has a~positive
 Galois conjugate $\beta'>0$ and $\gamma(\beta)=0$ by \cite[Proposition~5]{akiyama_1998}.)
To this end, we define $\beta$-adic expansions (not to be confused with the Rényi $\beta$-expansions)
 similarly to $p$-adic expansions with $p\in\Z$, see also~\S\ref{sect:Hensel}.

\begin{definition}\label{def:H}
Let $\beta$ be an algebraic integer.
The \defined{$\beta$-adic expansion} of $x\in\Z[\beta]$ is the unique infinite word $\h(x)\eqdef u_0u_1u_2\dotsm$
 such that $u_n\in\{0,1,\dots,\abs{N(\beta)}-1\}$  and
\(
	x-\sum_{i=0}^{n-1} u_i\beta^i\in\beta^n\Z[\beta]
\) for all $n\in\N$.
\end{definition}
 
\begin{theorem}\label{thm:gamma-bar}
Let $\beta$ be a~quadratic Pisot number, root of $\beta^2=a\beta+b$ with $a\geq b\geq1$.
Then
\[
	\gamma(\beta) = \begin{cases}
		0
		&\quad
		\text{if $\sup_{j\in\Z} P_{\h(j-\beta)}(\beta')>\beta$ or $\inf_{j\in\Z} P_{\h(j)}(\beta')<-1$,}
	\\[1ex]
		\beta-a
		&\quad
		\text{if $\sup_{j\in\Z} P_{\h(j-\beta)}(\beta')\in(2\beta-a-1,\beta]$ }
		\text{and $\inf_{j\in\Z} P_{\h(j)}(\beta')\geq\beta-a-1$,}
	\\[1ex]
		\mathrlap{
		1+\inf_{j\in\Z} P_{\h(j)}(\beta')
		\qquad
		\text{otherwise,}
		}
	\end{cases}
\]
 where $P_{u_0u_1u_2\dotsm}(X)\eqdef \sum_{n\geq0} u_nX^n$.
\end{theorem}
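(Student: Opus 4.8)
The plan is to derive the formula from the characterization recalled above: for $x=p/q\in[0,1)$ with $\gcd(q,b)=1$, the number $x$ has a~purely periodic $\beta$-expansion if and only if its diagonal embedding lies in the natural extension domain $\XX$. I take $\XX\subset[0,1)\times\R\times\Kf$, where $\Kf$ is the completion of $\Q(\beta)$ at the finite place(s) above $b$, and where ``diagonal'' refers to the fact that for rational $x$ the contracting real coordinate (the image under $\beta\mapsto\beta'$) equals $x$ itself, so the embedded point is $(x,x,\iota(x))$ with $\iota\colon\Q(\beta)\to\Kf$ the $\beta$-adic embedding. Thus $\gamma(\beta)$ is exactly the infimum of the positive $x$ for which this point leaves $\XX$. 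First I would import the explicit description of $\XX$ for quadratic Pisot $\beta$ with $N(\beta)=-b<0$ from \cite{minervino_steiner_2014}: over each first coordinate $x\in[0,1)$ the fiber $\XX_x\subset\R\times\Kf$ is, piecewise in~$x$, a region whose $\Kf$-component is a~fixed compact set and whose extent in the contracting real ($\beta'$) direction is an interval with explicitly computable endpoints. The two branches of $T$, coming from the discontinuity of $x\mapsto\beta x-\qfl{\beta x}$, split $[0,1)$ into the relevant pieces and introduce the $\beta$-shift that appears below.

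The key reduction is to collapse the $\Kf$-direction onto the real line. For $w\in\Z[\beta]$ the $\beta$-adic expansion $\h(w)=u_0u_1\cdots$ satisfies $w=\sum_{n\geq0}u_n\beta^n$ in $\Kf$, and replacing $\beta$ by its conjugate $\beta'$ yields the convergent real sum $P_{\h(w)}(\beta')=\sum_{n\geq0}u_n(\beta')^n$ (here $\abs{\beta'}<1$). I would show that this evaluation intertwines the $\beta$-adic coordinate of the embedded point with the real $\beta'$-coordinate, so that membership of $(x,x,\iota(x))$ in $\XX_x$ reduces to a~pair of inequalities comparing $x$ with real quantities of the form $P_{\h(w)}(\beta')$. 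Since $\gcd(q,b)=1$ makes $q$ a~unit in $\Kf$, the finite-place coordinate $\iota(p/q)$ runs, as $p/q\to0^+$, over representatives that are $\beta$-adically close to integers; the extremal positions of the boundary are therefore attained on the integer translates $j\in\Z$ (lower boundary) and on the $\beta$-shifted integers $j-\beta$ (upper boundary, the shift reflecting the inverse-branch structure of $T$). This produces the two governing quantities
\[
	S\eqdef\sup_{j\in\Z}P_{\h(j-\beta)}(\beta'),\qquad
	L\eqdef\inf_{j\in\Z}P_{\h(j)}(\beta').
\]

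With $S$ and $L$ in hand I would identify them with the upper and lower extents of $\XX$ along the diagonal and then read off $\gamma(\beta)$ by letting $x\to0^+$. The lower obstruction contributes the candidate $1+L$, while the upper obstruction is governed by $S$ together with the quantity $\beta-a=b/\beta$ dictated by the top branch of $T$; comparing the two candidates against the sanity bounds $\gamma\in[0,1]$ yields the three regimes. Concretely, if the boundary already fails at $x=0$ — detected by $S>\beta$ or $L<-1$ — then arbitrarily small admissible rationals escape and $\gamma(\beta)=0$; if the upper branch is the binding constraint, encoded by $S\in(2\beta-a-1,\beta]$ and $L\geq\beta-a-1$, the first escape occurs at the branch point and $\gamma(\beta)=\beta-a$; otherwise the lower branch binds and $\gamma(\beta)=1+L$.

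I expect the main obstacle to be the middle step: rigorously proving that the evaluation $w\mapsto P_{\h(w)}(\beta')$ genuinely transports the fractal $\Kf$-boundary of $\XX$ to the stated real supremum and infimum, and in particular that these extrema over $\Z$ are attained (or approached) precisely on the integer and $\beta$-shifted integer points and not on some larger set, with the open/closed nature of the boundary tracked so that the thresholds $2\beta-a-1$, $\beta-a-1$, and $\beta-a$ come out \emph{sharp}. Once this is secured, the remaining case distinction is a~finite, if delicate, comparison of the competing real bounds.
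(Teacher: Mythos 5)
Your plan follows the same route as the paper: reduce via Theorem~\ref{thm:Pur-XX} to deciding when $\delta(x)\in\XX$, use the explicit description of the quadratic tiles $\QQ(0)$ and $\QQ(\beta-a)$ from \cite{minervino_steiner_2014}, and convert the finite-place data into real numbers by evaluating $\beta$-adic expansions at $\beta'$; your concluding three-way case distinction also matches the paper's reduction~\eqref{eq:gamma-Y}. But the step you yourself flag as ``the main obstacle'' is not a deferrable technicality --- it is the entire content of the proof, and your proposal leaves it unproved. Concretely, what must be established is the pair of identities $l_0=l_{\beta-a}-1=-\beta+\sup_{j\in\Z}P_{\h(j-\beta)}(\beta')$ and $r_0=r_{\beta-a}=1+\inf_{j\in\Z}P_{\h(j)}(\beta')$ for the extremal positions of the tile boundaries over $Y=K'\times(\Z_b)_\f$. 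The paper obtains these from three specific ingredients: (i) the boundary graph of \cite[\S\S8.3, 9.2, 9.3]{minervino_steiner_2014}, which parametrizes $\partial^{\pm}\RR(x)$ as $\delta'(t)+\{P_{\bm u}(\delta'(\beta)) : \bm u\in(\AA\BB)^\omega\}$ for explicit translates $t$, combined with the digit symmetry $\AA=b-1-\AA$, $\BB=a-\AA$ that rewrites these $(\AA\BB)^\omega$-sums as $-\delta'(1)$ minus $\AA^\omega$-sums; (ii) Lemma~\ref{lem:H}, the homeomorphism $\h_\f$, which translates ``this boundary point lies over $(\Z_b)_\f$'' into ``$\bm u\in\h_\f(\Z_\f)$'' for the right boundary and ``$\bm u\in\h_\f(\Z_\f-\beta_\f)$'' for the left; and (iii) Lemma~\ref{lem:closure}, $\OL{(\Z)_\f}=\OL{(\Z_b)_\f}=\OL{(\Z_b\cap[c,d])_\f}$, which both reduces the extrema over $\Z_\f$ to extrema over $j\in\Z$ (via continuity of $z\mapsto P_{\h_\f(z)}(\beta')$) and supplies the density of non-purely-periodic rationals outside the resulting intervals, without which the infimum in Definition~\ref{def:gamma} cannot be evaluated from above.

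Two points in your sketch are moreover aimed in the wrong direction. First, your justification for why the index sets are $j\in\Z$ and $j-\beta$ --- that ``as $p/q\to0^+$'' the finite coordinates become $\beta$-adically close to integers --- is not the actual mechanism: the correct statement is that $(\Z)_\f$ is dense in $(\Z_b\cap[c,d])_\f$ for \emph{every} interval $[c,d]$, independently of $p/q$ being small, and the shift by $-\beta$ arises from the translate $\delta'(\beta-a)$ in the parametrization of $\partial^-\QQ(0)$, i.e.\ from the tile adjacency $\RR(\beta-a)\cap\RR\bigl(2\beta-\qfl{2\beta}\bigr)$, not directly from an inverse branch of $T$. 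Second, the fibers of $\QQ(x)$ over points of $K_\f$ are segments whose endpoints vary with the finite coordinate (this is precisely the fractal boundary), so there is no fixed ``interval with explicitly computable endpoints'' per piece of $[0,1)$; the supremum and infimum of those varying endpoints restricted to $Y$ is exactly the quantity to be computed, and nothing short of the parametrization in (i)--(iii) produces the sharp thresholds $\beta$, $2\beta-a-1$ and $\beta-a-1$ in the statement.
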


In many cases, we obtain the following direct formula (which we conjecture to be true for all $a\geq b\geq1$):

\begin{theorem}\label{thm:gamma0}
Let $\beta$ be a~quadratic Pisot number, root of $\beta^2=a\beta+b$ for $a\geq b\geq1$.
Suppose $a>\frac{1+\sqrt5}{2}b$ or $a=b$ or $a\perp b$.
Then
\begin{align}\label{eq:gamma0}
	\gamma(\beta)&=\max\Bigl\{ 0, 1 + \inf_{j\in\Z} P_{\h(j)}(\beta') \Bigr\}
.\end{align}
\end{theorem}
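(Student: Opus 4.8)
The plan is to derive Theorem~\ref{thm:gamma0} directly from the three-case formula of Theorem~\ref{thm:gamma-bar}. Abbreviate $S\eqdef\sup_{j\in\Z}P_{\h(j-\beta)}(\beta')$ and $I\eqdef\inf_{j\in\Z}P_{\h(j)}(\beta')$, and observe that the claimed value $\max\{0,1+I\}$ equals $0$ exactly when $I\leq-1$ and equals $1+I$ when $I\geq-1$. Comparing with Theorem~\ref{thm:gamma-bar}, the third (``otherwise'') case already yields $\gamma(\beta)=1+I$, and there the negation of the first case forces $I\geq-1$, so $1+I=\max\{0,1+I\}$ automatically. Thus the statement reduces to two claims relating the extremal quantities $S$ and $I$: first, that whenever the first case gives $\gamma(\beta)=0$ through the clause $S>\beta$, one in fact also has $I\leq-1$; and second, that in the second case the value $\beta-a$ coincides with $\max\{0,1+I\}$, i.e.\ that $I=\beta-a-1$ there. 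Recall $\beta-a=b/\beta\in(0,1)$, so $\beta-a-1\in(-1,0)$, and the second case only supplies the inequality $I\geq\beta-a-1$, which must be upgraded to equality.

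To control $S$ and $I$ I would first make the self-similarity of the $\beta$-adic expansions explicit. Writing $F(x)\eqdef P_{\h(x)}(\beta')$ for $x\in\Z[\beta]$, the recursion of Definition~\ref{def:H} gives $F(x)=u_0(x)+\beta'F(\phi(x))$, where $u_0(x)\in\{0,\dots,b-1\}$ is the residue of $x$ modulo $\beta$ and $\phi(x)=(x-u_0(x))/\beta$. A short computation shows that as $j$ runs through $\Z$ one has $\phi(j)=t(\beta-a)$ with $t=(j-u_0)/b$ ranging over all of $\Z$, and that $\h(j-\beta)$ shares its first digit with $\h(j)$ and then continues as $\h(\phi(j)-1)$. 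Since $\beta'<0$, optimizing the leading digit and the tail separately yields $I=\beta'\sup_{t\in\Z}F(t(\beta-a))$ and $S=(b-1)+\beta'\inf_{t\in\Z}F(t(\beta-a)-1)$, reducing everything to the behaviour of $F$ on the lattice $(\beta-a)\Z$ and its translate by $-1$. Iterating $\phi$ relates these extremal values back to one another, and via the self-similar (Dumont--Thomas type) structure of the expansions this produces a system of relations, with coefficients powers of $\beta'$ shifted by digit sums, that can be solved for $S$ and $I$ once the extremal configurations are pinned down.

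The main obstacle is to identify the \emph{extremal digit sequences} realizing the inner sup and inf, and this is exactly where the three hypotheses enter. Because the digits lie in $\{0,\dots,b-1\}$ and subtraction of $1$ or of $\beta$ propagates borrows through the expansion, the naive optimizers can fail to be admissible unless $a$ and $b$ are suitably related; the bound $a>\frac{1+\sqrt5}{2}b$, the diagonal case $a=b$, and the coprime case $a\perp b$ are precisely the regimes in which the borrow propagation terminates in a controlled, periodic way so that the optimizing sequences can be named and $S$, $I$ computed in closed form. With explicit $S$ and $I$ in hand I would finish by checking the two reductions: that $S>\beta$ forces the expansion computing $I$ to have value at most $-1$, and that the window $S\in(2\beta-a-1,\beta]$ of the second case is attained only when the $I$-optimizer sits exactly at $\beta-a-1$. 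These two coincidences collapse the three cases of Theorem~\ref{thm:gamma-bar} to $\max\{0,1+I\}$. I expect the borrow-propagation analysis under the three hypotheses, rather than the case bookkeeping, to be the genuinely delicate part, and the reason the formula is only conjectural outside these regimes.
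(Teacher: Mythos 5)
There is a genuine gap: your argument stops exactly where the work begins. The logical reduction in your first paragraph is sound (in the ``otherwise'' case of Theorem~\ref{thm:gamma-bar} one indeed has $I\geq-1$, so the only issues are the clause $S>\beta$ of the first case and the whole of the second case), and your recursion $F(x)=u_0(x)+\beta'F(\phi(x))$ with $\phi(j)=t(\beta-a)$ is correct. But the two ``coincidences'' you reduce to --- that $S>\beta$ forces $I\leq-1$, and that the window $S\in(2\beta-a-1,\beta]$ forces $I=\beta-a-1$ --- are never established; you only assert that under the three hypotheses the ``borrow propagation terminates in a controlled, periodic way so that the optimizing sequences can be named.'' No such identification is carried out, and it is far from routine: pinning down which sequences in $\AA^\omega$ actually occur as $\h(j)$ or $\h(j-\beta)$ is precisely the hard combinatorial content (cf.\ open question (C) in Section~\ref{sect:general}), and if your two coincidences held for all $a\geq b\geq1$ they would settle the paper's conjecture that \eqref{eq:gamma0} always holds. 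So the proposal is a plausible reduction plus an unproved core, not a proof.

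The paper's route avoids this entirely and is worth comparing. For $a>\frac{1+\sqrt5}{2}b$ (and also $a=b=2$) it does not identify any extremal admissible sequence: it bounds $S$ crudely by the supremum over \emph{all} of $\AA^\omega$, namely $P_{\per{(b{-}1)0}}(\beta')=\frac{b-1}{1-(\beta')^2}$, and a direct computation (inequality \eqref{eq:ffff}) shows this is $<2\beta-a-1$. Hence neither the clause $S>\beta$ nor the second case of Theorem~\ref{thm:gamma-bar} can occur at all, and the formula collapses to $\max\{0,1+I\}$ with no claim about the exact value of $S$ or $I$. For $a=b\geq3$ it instead shows $I<-1$ by exhibiting explicit finite prefixes (e.g.\ $\Pref[4]{\h(b)}=001(b{-}1)$ via Proposition~\ref{prop:estim-2}), so both sides of \eqref{eq:gamma0} are $0$ whatever $S$ is. The case $a\perp b$ is quoted from \cite[\S9]{minervino_steiner_2014}, which your sketch also does not replace. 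If you want to salvage your approach, the realistic fix is to replace your two coincidences by the paper's weaker and checkable claim $S<2\beta-a-1$ in the regime $a>\frac{1+\sqrt5}{2}b$, and by an explicit witness for $I<-1$ when $a=b\geq3$.
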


The infimum in \eqref{eq:gamma0} can be computed easily with the help of Proposition~\ref{prop:estim} below.
In the case $\frac ab\in\Z$, Proposition~\ref{prop:estim-2} provides an even faster algorithm,
 and we are able to prove a~necessary and sufficient condition for $\gamma(\beta)=1$:

\begin{theorem}\label{thm:abZ}
Let $\beta$ be a~quadratic Pisot number, root of $\beta^2=a\beta+b$ with $a\geq b\geq1$ and such that $b$ divides $a$.
\begin{enumerate}

\item
We have that $\gamma(\beta)=1$ if and only if $a\geq b^2$ or $(a,b)\in\{(24,6),(30,6)\}$.

\item
If $a=b\geq3$ then $\gamma(\beta)=0$.

\end{enumerate}
\end{theorem}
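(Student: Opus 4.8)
The plan is to reduce both statements to the sign and value of $\delta \eqdef \inf_{j\in\Z} P_{\h(j)}(\beta')$, and then to analyse $\delta$ under the hypothesis $b\divides a$. Since $a\geq b$ and $b\divides a$, either $a=b$ or $a\geq 2b$, and as $2>\frac{1+\sqrt5}{2}$ the hypotheses of Theorem~\ref{thm:gamma0} hold in both cases; hence $\gamma(\beta)=\max\{0,\,1+\delta\}$. Because $\h(0)$ is the all-zero word, $P_{\h(0)}(\beta')=0$, so $\delta\leq0$ always. Consequently $\gamma(\beta)=1\iff\delta=0\iff P_{\h(j)}(\beta')\geq0$ for all $j\in\Z$, while $\gamma(\beta)=0\iff\delta\leq-1$. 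Thus (1) amounts to deciding when $\delta=0$, and (2) to showing $\delta\leq-1$ when $a=b\geq3$.

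Next I would make the integer expansions explicit. Writing $a=kb$ and running the digit/remainder process on $j$, the remainders $y_n=s_n+t_n\beta\in\Z[\beta]$ satisfy $t_{-1}=j$, $t_0=0$ and the linear recursion $t_{n+1}=\lfloor t_{n-1}/b\rfloor-k\,t_n$, with digits $u_n=t_{n-1}\bmod b=t_{n-1}-a\,t_n-b\,t_{n+1}$. Summing $u_n(\beta')^n$ and using the characteristic identity $(\beta')^2=a\beta'+b$ telescopes every $t_n$-contribution, leaving the clean partial-sum formula
\[
	\sum_{n=0}^{N}u_n(\beta')^n \;=\; j-(\beta')^{N+1}\sigma(y_{N+1}),
\]
where $\sigma$ is the Galois conjugation $\beta\mapsto\beta'$. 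This is the key bridge: it expresses the Archimedean value $P_{\h(j)}(\beta')$ through the (exponentially growing) finite-place remainders, and it is exactly the quantity controlled by Propositions~\ref{prop:estim} and~\ref{prop:estim-2}.

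I would then invoke Proposition~\ref{prop:estim-2}, whose faster algorithm is available precisely because $a/b\in\Z$, to reduce the infimum over all $j\in\Z$ to an explicit finite computation governed by the residues modulo $b$ and the recursion above. For part~(1): when $a\geq b^2$, i.e.\ $k\geq b$, I would show every integer expansion yields $P_{\h(j)}(\beta')\geq0$, so $\delta=0$ and $\gamma=1$; when $k<b$ and $(a,b)\notin\{(24,6),(30,6)\}$, the finite search produces an explicit $j$ with $P_{\h(j)}(\beta')<0$, so $\delta<0$ and $\gamma<1$; the two sporadic pairs are then checked directly to have $\delta=0$, despite $k=4,5<6$. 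For part~(2), with $a=b\geq3$ (so $k=1$), I would exhibit a single integer $j$ whose expansion forces $P_{\h(j)}(\beta')\leq-1$, giving $\delta\leq-1$ and hence $\gamma(\beta)=0$.

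The main obstacle is the sharpness in part~(1): proving that the sign of $\delta$ flips exactly at the threshold $a=b^2$, together with the honest accounting of the two exceptional pairs $(24,6)$ and $(30,6)$. The existence of these exceptions shows that the bound $a\geq b^2$ cannot come from any soft monotonicity argument, so the extremal integer expansions near $k\approx b$ must be controlled very precisely; this delicate combinatorial estimate on the recursion $t_{n+1}=\lfloor t_{n-1}/b\rfloor-k\,t_n$, carried out through Proposition~\ref{prop:estim-2}, is where the real work lies.
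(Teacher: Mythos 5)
Your reduction is exactly the paper's: since $b\divides a$ forces $a=b$ or $a\geq 2b>\frac{1+\sqrt5}{2}b$, Theorem~\ref{thm:gamma0} applies, and everything rests on $\delta\eqdef\inf_{j\in\Z}P_{\h(j)}(\beta')$; your remainder recursion and partial-sum identity are a correct reformulation of the congruences \eqref{eq:binom}--\eqref{eq:j-beta2m} that the paper actually works with. The difficulty is that everything after this point describes what would have to be proved rather than proving it, and one of the mechanisms you lean on cannot work as stated. Proposition~\ref{prop:estim-2} encloses $\delta$ in $\bigl[\mu_n+(\beta')^{2n+1}\tfrac{b-1}{1-(\beta')^2},\,\mu_n\bigr]$ with $\mu_n\leq0$, so a finite computation can certify $\delta<0$ but can never certify $\delta=0$: that requires showing $P_{\h(j)}(\beta')\geq0$ for \emph{every} $j\in\Z$. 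The paper does this by writing $j=b^n(j_0+j_1b)$ with $j_0\in\AA\setminus\{0\}$, observing that $\h(j)$ begins with $0^{2n}j_0$, and bounding the tail from below by the worst word $\per{(b{-}1)0}$; positivity then reduces to $1-\frac{(b-1)b\beta}{\beta^2-b^2}>0$, which is precisely the threshold $a>b(b-1)$, i.e.\ $a\geq b^2$ when $b\divides a$. For $(24,6)$ and $(30,6)$ this crude bound fails, and the paper must additionally prove, by congruence analysis on the coefficients in \eqref{eq:j-beta2m}, that the dangerous digit patterns cannot occur (e.g.\ $u_0u_1=15$ forces $u_3=0$, so the tail bound can be restarted after a guaranteed $0$). ``Checked directly'' conceals all of this, and it is the heart of part~(i).

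The ``only if'' half of (i) has the same defect: the pairs with $b\divides a$ and $b\leq a<b^2$ form an infinite family, so ``the finite search produces an explicit $j$'' is not an argument. The paper's uniform construction sets $c=a/b$, $n=\lceil\frac{c}{b-c}\rceil$, and shows that $\h(b^n)$ (or $\h\bigl(b^n-\binom{n+1}{2}c^2b^{n+1}\bigr)$ when $\frac{c}{b-c}\in\Z$) has a prefix of negative value; the residual case analysis on $\gcd(b,6)$ is exactly where $(2,2)$, $(24,6)$ and $(30,6)$ emerge as the only escapees. Part~(ii) is the most tractable piece and your plan is right in outline (the paper uses $j=b$ with prefix $001(b{-}1)$ for $b\geq4$, the estimate $-\beta'>\frac{b}{b+1}$ for $b\geq5$, and $j=21$ for $b=3$), but the witness and the estimate still have to be supplied. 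In short: the skeleton is correct and matches the paper, but the body of the proof is missing, and the $\delta=0$ cases cannot be closed by the finite computation you propose.
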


This paper is organized as follows:
In the next section, notions on words, representation spaces and $\beta$-tiles are recalled,
 and properties of $\beta$-adic expansions are studied.
Section~\ref{sect:tiles} connects tiles arising from the $\beta$-transformation and the value $\gamma(\beta)$
 in order to prove Theorem~\ref{thm:gamma-bar}.
The proof of Theorem~\ref{thm:gamma0} is completed in Section~\ref{sect:b-div-a}, together with that of Theorem~\ref{thm:abZ}.
Comments on the general case are in Section~\ref{sect:general}, along with a list of related open questions.

\section{Preliminaries}

\subsection{Words over a finite alphabet}
We consider both finite and infinite words over a~finite alphabet~$\AA$.
The set of finite words over $\AA$ is denoted $\AA^*$.
The set of all (right) infinite words over $\AA$ is denoted $\AA^\omega$, and it is equipped with the Cantor topology.
An infinite word is \defined{(eventually) periodic} if it is of the form $v\per*{u}\eqdef vuuu\dotsm$;
 a~finite word $v$ is the pre-period and a non-empty finite word $u$ is the period;
 if the pre-period is empty, we speak about a~\defined{purely periodic word}.
A prefix of a~(finite or infinite) word~$w$ is any finite word $v$ such that $w$ can be written as
 $w=vu$ for some word $u$.
We denote by $\Pref[n]{\bm u}$ the prefix of length $n$ of an infinite word $\bm u$.

To a~finite word $w=w_0w_1\dots w_{k-1}$ we assign the polynomial
\[
	P_w(X) \eqdef \sum_{i=0}^{k-1} w_iX^i
.\]
Similarly, $P_{\bm u}(X) \eqdef \sum_{i\geq0} u_iX^i$ is a power series for an infinite word $\bm u=u_0u_1u_2\dotsm$.

\subsection{Representation spaces}

The following notation will be used:
For integers $a,b\in\Z$, we denote by $a\perp b$ the fact that
 $a$ and $b$ are co-prime, i.e., that $\gcd(a,b)=1$.
Moreover, for $b\geq2$ we put $\Z_b\eqdef\set{p/q}{p,q\in\Z,\, q\perp b}$
 (the ring of rational numbers with denominator co-prime to $b$).
%Note that we consider $0$ to be a natural number, therefore $\N\eqdef\{0,1,2,\dotsc\}$.

We adopt the notation of \cite{minervino_steiner_2014}, however, we restrict ourselves to $\beta$ being
 a~quadratic Pisot number.
Let $K=\Q(\beta)$.
Since $\beta$ is quadratic, there are exactly two infinite places of $K$;
 they are given by the two Galois isomorphisms of $\Q(\beta)$: the identity and $x\mapsto x'$
 that maps $\beta$ to its Galois conjugate.
Both these places have $\R$ as their completion.

If $\beta$ is not a~unit, then we have to consider finite places of $K$ as well.
We define the ring $\Kf$ as the direct product $\Kf\eqdef \prod_{\frakp\divides (\beta)} K_\frakp$,
 where $\frakp$ runs through all prime ideals of $\Q(\beta)$ that divide the principal ideal $(\beta)$
 and $K_\frakp$ is the associate completion of $\K$;
 for a precise definition, we refer to~\cite[\S2.2]{minervino_steiner_2014}.
The direct products $\K\eqdef K\times K'\times\Kf$ and $\K'\eqdef K'\times\Kf$ are called \defined{representation spaces}.
We define the diagonal embeddings
\[
	\delta\colon\Q(\beta)\to\K, \; x\mapsto(x,x',x_\f)
\quad\text{and}\quad
	\delta'\colon\Q(\beta)\to\K', \; x\mapsto(x',x_\f)
,\]
 where $x_\f$ is the vector of the embeddings of $x$ into the spaces $K_\frakp$.
We put
\[
	S_\f \eqdef \OL{\set{x_\f}{x\in S}}
\quad
	\text{for any $S\subseteq K$.}
\]
In particular, we consider $\ol{\Z[\beta]_\f}$, which is a compact subset of $\Kf$.
Since multiplication by $\beta_\f$ is a contraction on $K_\f$, we have that $\beta_\f^n\Z[\beta]_\f \to \{0_\f\}$ as $n\to\infty$.

%Note that $\Z[\beta]_\f$ is isomorphic to the inverse limit space $\inverselim \Z[\beta]/\beta^n\Z[\beta]$, see~\S\ref{sect:Hensel}.
%However, the above viewpoint allows us to use the results of~\cite{minervino_steiner_2014}.
%As a sub-ring of $\Kf$ it is isomorphic to the inverse limit space $\inverselim \Z[\beta]/\beta^n\Z[\beta]$, see~\S\ref{sect:Hensel};
% it is interesting to us mainly because $\Z_b$ is a sub-ring of it, actually,
% by Lemma~\ref{lem:closure} we have that $\ol{\Z_b[\beta]_\f}=\ol{\Z[\beta]_\f}$.

\subsection{Beta-tiles}\label{subsect:tiles}

For $x\in[0,1)$, we define the (reflected and translated) \defined{$\beta$-tile} of $x$ as the Hausdorff limit
\[
	\QQ(x)\eqdef \lim_{k\to\infty} \delta'\bigl(x-\beta^k T^{-k}(x)\bigr) \subseteq \K'
.\]
Note that the standard definition of a~$\beta$-tile for $x\in\Z[\beta^{-1}]\cap[0,1)$ is $\RR(x)\eqdef\delta'(x)-\QQ(x)$,
 see e.g.~\cite{minervino_steiner_2014}.
For a quadratic Pisot number $\beta$, root of $\beta^2=a\beta+b$ with $a\geq b\geq1$,
 we have that $\QQ(x)=\QQ(0)$ for $x<\beta-a$ and $\QQ(x)=\QQ(\beta-a)$ otherwise.
The dynamical system $([0,1),T)$ admits $(\XX,\TT)$ as its natural extension,
 where
\begin{gather*}
	\XX \eqdef
		\bigl( [0,\beta-a)\times\QQ(0) \bigr)
	\cup
		\bigl( [\beta-a,1)\times\QQ(\beta-a) \bigr)
	\subset\K
\end{gather*}
 is a union of two suspensions of $\beta$-tiles
 and $\TT(x,y)\eqdef \delta(\beta)(x,y)-\delta(\qfl{\beta x})$.
The natural extension domain is often required to be a~closed set,
 but here it is more convenient to work with the one above, since the following result holds:
 
\begin{Theorem}[\cite{hama_imahashi_1997,ito_rao_2005,berthe_siegel_2007}]\label{thm:Pur-XX}
For a~Pisot number $\beta$, we have that $x$ has a~purely periodic $\beta$-expansion if and only if $x\in\Q(\beta)$
 and $\delta(x)\in\XX$.
\end{Theorem}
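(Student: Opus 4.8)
The plan is to prove both implications via the algebraic conjugacy between $T$ and the natural extension map $\TT$, which is the strategy underlying the cited works of Ito--Rao and of Berthé--Siegel. The key identity is that on $\Q(\beta)\cap[0,1)$ the embedding $\delta$ intertwines $T$ and $\TT$: since $\delta(\beta x)=\delta(\beta)\delta(x)$ and an integer embeds identically into every place, one checks directly that
\[
	\TT\bigl(\delta(x)\bigr)=\delta(\beta)\delta(x)-\delta\bigl(\qfl{\beta x}\bigr)=\delta\bigl(\beta x-\qfl{\beta x}\bigr)=\delta(Tx)
\]
whenever $\delta(x)\in\XX$. Consequently the $\delta$-image of a $T$-orbit in $\Q(\beta)$ is a $\TT$-orbit, and if $\delta(x)\in\XX$ then $\TT$-invariance of $\XX$ keeps every forward iterate $\delta(T^nx)=\TT^n\delta(x)$ inside $\XX$.

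For the direction ``$\delta(x)\in\XX\Rightarrow$ purely periodic'', I would first use the hypothesis $x\in\Q(\beta)$ and Schmidt's theorem~\cite{schmidt_1980} to conclude that the expansion is eventually periodic, say $T^{n+p}x=T^nx$ with $n$ chosen minimal. If $n\geq1$, then $T^{n-1}x$ and $T^{n+p-1}x$ are distinct by minimality, yet their $\delta$-images both lie in $\XX$ and are sent by $\TT$ to the common point $\delta(T^nx)$; this contradicts injectivity of $\TT$ on $\XX$. Hence $n=0$ and $x$ is purely periodic.

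For the converse, suppose $T^px=x$. Running the periodic orbit backwards supplies, for each $k$, a canonical preimage $y_k\in T^{-k}(x)$ that is periodic in $k$, so the sets $\{y_k'\}$ and $\{(y_k)_\f\}$ are bounded. Since $\beta$ is Pisot, $\abs{\beta'}<1$, and multiplication by $\beta_\f$ contracts $\Kf$; therefore
\[
	\delta'\bigl(x-\beta^k y_k\bigr)=\bigl(x'-(\beta')^k y_k',\;x_\f-\beta_\f^k (y_k)_\f\bigr)\xrightarrow[k\to\infty]{}(x',x_\f)=\delta'(x).
\]
As $y_k\in T^{-k}(x)$, each term lies in the set $\delta'\bigl(x-\beta^kT^{-k}(x)\bigr)$ whose Hausdorff limit is $\QQ(x)$, so $\delta'(x)\in\QQ(x)$. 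Since $x\in[0,1)$ falls in the interval ($<\beta-a$ or $\geq\beta-a$) selecting the matching tile, this reads exactly as $\delta(x)=\bigl(x,\delta'(x)\bigr)\in\XX$.

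The hard part will be the injectivity of $\TT$ on $\XX$ invoked in the first implication: pointwise this amounts to the no-overlap (tiling) property of the tiles $\QQ(0)$ and $\QQ(\beta-a)$ under the $\TT$-dynamics, which is exactly what elevates $(\XX,\TT)$ from a measure-theoretic natural extension to a genuine one, and what must hold at the specific algebraic points $\delta(T^jx)$ rather than merely almost everywhere. This is the substance supplied by Hama--Imahashi, Ito--Rao and Berthé--Siegel. A secondary technical point is verifying that the Hausdorff limit defining $\QQ(x)$ really captures the limit of the particular sequence $\delta'(x-\beta^k y_k)$, for which one uses the compactness of $\OL{\Z[\beta]_\f}$ together with the convergence of the preimage sets $\delta'\bigl(x-\beta^kT^{-k}(x)\bigr)$.
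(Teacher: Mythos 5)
You should first be aware that the paper contains no proof of this statement: it is Theorem~\ref{thm:Pur-XX}, imported as a black box from \cite{hama_imahashi_1997,ito_rao_2005,berthe_siegel_2007}, and its only role here is as an ingredient in the proof of Theorem~\ref{thm:gamma-bar}. So the only meaningful comparison is with the strategy of the cited literature, and your sketch does reconstruct that strategy faithfully: the intertwining $\TT(\delta(x))=\delta(Tx)$, Schmidt's theorem to get eventual periodicity, and injectivity of $\TT$ on $\XX$ to push the period back to the start in one direction; the backward periodic orbit $y_k\in T^{-k}(x)$ with bounded conjugate and $\f$-embeddings, contraction by $\beta'$ and $\beta_\f$, and closedness of the Hausdorff limit $\QQ(x)$ in the other. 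Both reductions are sound as written.

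What keeps this from being a proof is exactly what you flag, plus one item you do not. The injectivity of $\TT$ on all of $\XX$ (not merely a.e., which is all that the measure-theoretic notion of natural extension provides) is indeed the core difficulty, and it is equivalent to a non-overlap property of translates of the suspended tiles; this is the substance of the cited papers, and in the non-unit case it genuinely requires the finite places (the $\Kf$ factor), which is Berthé--Siegel's contribution. But you also silently use that $\TT(\XX)\subseteq\XX$ (``$\TT$-invariance of $\XX$ keeps every forward iterate inside''): for the half-open domain $\XX$ chosen in this paper this exact (not a.e.) invariance is just as nontrivial as injectivity and is part of the same package; it should be flagged alongside it rather than asserted. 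Finally, a small logical gap in your converse direction: before writing $x'$, $x_\f$ or $\delta(x)$ you must note that $T^px=x$ forces $x\in\Q(\beta)$ (from $\beta^px-x\in\Z[\beta]$ and $\beta^p\neq1$), since the statement being proved is a conjunction and the embeddings are otherwise undefined.
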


\subsection{Beta-adic expansions}\label{sect:Hensel}

In Definition~\ref{def:H}, $\beta$-adic expansions are defined on $\Z[\beta]$.
By Lemma~\ref{lem:H} below, we extend this definition to the closure $\Z[\beta]_\f$
 similarly to the $p$-adic case.
To this end, let
\[
	D\colon \ol{\Z[\beta]_\f} \to \ol{\Z[\beta]_\f}
,\quad x \mapsto \beta_\f^{-1} \bigl(z-d(z)_\f\bigr)
,\]
 where $d(x)$ is the unique digit $d\in\AA\eqdef\{0,1,\dots,\abs{N(\beta)}-1\}$ such that $\beta_\f^{-1}(x-d_\f)$ is in $\ol{\Z[\beta]_\f}$.
Such $d$ exists because $\Z[\beta] = \AA + \beta \Z[\beta]$.
It is unique because $(c+\ol{\beta \Z[\beta]})_\f \cap (d+\ol{\beta \Z[\beta]})_\f \neq \emptyset$
 implies $(\beta^{-1} (c-d))_\f \in \ol{\Z[\beta]_\f}$ and thus $c \equiv d\pmod{N(\beta)}$ by the following lemma:

\begin{lemma}[{\cite[Lemma~5.2 and Eq.~(5.1)]{minervino_steiner_2014}}]\label{lem:5.2}
For each $x\in\Z[\beta^{-1}]\setminus\Z[\beta]$ we have $x_\f\notin\ol{\Z[\beta]_\f}$.
There exists $k\in\N$ such that $\Z[\beta^{-1}]\cap \beta^k\OO\subseteq\Z[\beta]$,
 where $\OO$ is the ring of integers in $\Q(\beta)$.
\end{lemma}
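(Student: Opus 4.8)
The statement splits into an inclusion criterion (first sentence) and a divisibility statement (second sentence); the plan is to establish the criterion first and then deduce the divisibility statement from it by a short compactness argument. Throughout write $b=\abs{N(\beta)}$ and recall that, by the very definition of $\Kf=\prod_{\frakp\divides(\beta)}K_\frakp$, \emph{every} prime divisor of $N(\beta)$ contributes a factor. The convenient contrapositive form of the criterion is: if $x\in\Z[\beta^{-1}]$ and $x_\f\in\OL{\Z[\beta]_\f}$, then $x\in\Z[\beta]$.

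For this criterion I would argue via the leading $\beta$-adic digit. Suppose $x\in\Z[\beta^{-1}]\setminus\Z[\beta]$ but $x_\f\in\OL{\Z[\beta]_\f}$. Since $\Z[\beta^{-1}]=\bigcup_{m\geq0}\beta^{-m}\Z[\beta]$, the set of $m$ with $\beta^m x\in\Z[\beta]$ is nonempty, upward closed, and omits $0$; let $n\geq1$ be its minimum and put $y=\beta^n x\in\Z[\beta]$. Minimality forces $y\notin\beta\Z[\beta]$, so the class of $y$ in $\Z[\beta]/\beta\Z[\beta]$ is nonzero; on the other hand $y_\f=\beta_\f^n x_\f\in\beta_\f\OL{\Z[\beta]_\f}$. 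The contradiction comes from a continuous residue map $r\colon\OL{\Z[\beta]_\f}\to\Z[\beta]/\beta\Z[\beta]$ extending $w\mapsto w+\beta\Z[\beta]$ and having kernel $\beta_\f\OL{\Z[\beta]_\f}$, for then $r(y_\f)=0$ while also $r(y_\f)=y+\beta\Z[\beta]\neq0$. To build $r$ it suffices to know that the open subgroup $\beta_\f\OL{\Z[\beta]_\f}$ has index exactly $b$ in $\OL{\Z[\beta]_\f}$: the finite discrete quotient then receives a continuous surjection from the dense image of $\Z[\beta]$, hence from $\Z[\beta]/\beta\Z[\beta]$, a group of the same order $b$ (as $\Z[\beta]=\AA+\beta\Z[\beta]$ and $\abs{\Z[\beta]/\beta\Z[\beta]}=b$), so the induced map is an isomorphism. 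The index in turn equals the reciprocal of the Haar-measure scaling factor of multiplication by $\beta_\f$ on the compact group $\OL{\Z[\beta]_\f}$, namely $\prod_{\frakp\divides(\beta)}\abs{\beta}_\frakp^{-1}=\prod_{\frakp\divides(\beta)}N\frakp^{\,v_\frakp(\beta)}$; since all prime divisors of $N(\beta)$ occur among the $\frakp\divides(\beta)$, this is the full finite part of $\abs{N(\beta)}$, that is $b$.

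Granting the criterion, the divisibility statement is immediate. The subring $\OL{\Z[\beta]_\f}$ is a finite-index closed subgroup of the maximal compact subring $\OL{\OO_\f}=\prod_{\frakp\divides(\beta)}\OO_\frakp$ (the index is bounded by $[\OO:\Z[\beta]]$, since $\Z[\beta]\subseteq\OO$), hence open, and in particular a neighbourhood of $0_\f$. Because multiplication by $\beta_\f$ is a contraction, $\beta_\f^k\OO_\f\to\{0_\f\}$, so $\beta_\f^k\OO_\f\subseteq\OL{\Z[\beta]_\f}$ for all sufficiently large $k$; fix such a $k$. If $x\in\Z[\beta^{-1}]\cap\beta^k\OO$, then $x_\f\in\beta_\f^k\OO_\f\subseteq\OL{\Z[\beta]_\f}$, and the criterion yields $x\in\Z[\beta]$, which is exactly the required inclusion.

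The main obstacle is the index computation $[\OL{\Z[\beta]_\f}:\beta_\f\OL{\Z[\beta]_\f}]=b$, equivalently the assertion that passing to the closure does not merge residue classes modulo $\beta$, i.e.\ that $\Z[\beta]\cap\beta_\f\OL{\Z[\beta]_\f}=\beta\Z[\beta]$. The structurally transparent route is to identify $\OL{\Z[\beta]_\f}$ with the completion $\prod_{\frakp\divides(\beta)}\varprojlim_n\Z[\beta]/(\frakp^n\cap\Z[\beta])$ through the approximation theorem, and then to use faithful flatness of completion, which guarantees that each local completion meets $\Q(\beta)$ in the local ring $\Z[\beta]_\frakp$ and thereby keeps distinct residues distinct; the Haar-measure count above merely packages the same content more economically. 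I expect that verifying this description of $\OL{\Z[\beta]_\f}$—rather than the formal manipulations built on it—is where the genuine work lies, and this is presumably what the equation labelled (5.1) supplies in the cited source.
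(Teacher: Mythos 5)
The paper does not actually prove this lemma---it imports it verbatim from \cite[Lemma~5.2 and Eq.~(5.1)]{minervino_steiner_2014}---so there is no internal proof to compare against; I am judging your argument on its own. It is essentially correct and self-contained. The load-bearing step is the index computation $[\OL{\Z[\beta]_\f}:\beta_\f\OL{\Z[\beta]_\f}]=\abs{N(\beta)}$, which you get from the module of multiplication by $\beta_\f$ on $\Kf$ (equal to $\prod_{\frakp\divides(\beta)}\abs{\beta}_\frakp=N((\beta))^{-1}=1/b$); combined with $\abs{\Z[\beta]/\beta\Z[\beta]}=b$ and density of $\Z[\beta]_\f$ in the compact group, the surjection $\Z[\beta]/\beta\Z[\beta]\to\OL{\Z[\beta]_\f}/\beta_\f\OL{\Z[\beta]_\f}$ between groups of equal order $b$ is an isomorphism, which is exactly the statement that passing to the closure does not merge residue classes mod $\beta$. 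Descending to the minimal $n$ with $\beta^nx\in\Z[\beta]$ then gives the first sentence, and the second sentence follows from openness of $\OL{\Z[\beta]_\f}$ plus contractivity of $\beta_\f$, as you say. Your Haar-measure route is arguably cleaner than the inverse-limit/flatness alternative you sketch at the end, which you can safely delete.

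Two points to tighten. First, the Haar-measure index computation already requires $\OL{\Z[\beta]_\f}$ to be \emph{open} in $\Kf$ (so that it and its image under $\beta_\f$ have positive finite measure), but you only establish openness in the second paragraph; move that forward. Second, your justification of openness---``the index is bounded by $[\OO:\Z[\beta]]$, since $\Z[\beta]\subseteq\OO$''---is not quite right as stated: finiteness of $[\OO:\Z[\beta]]$ does not by itself bound the index of the closures. The correct one-line argument is that $m\OO\subseteq\Z[\beta]$ for $m=[\OO:\Z[\beta]]$, hence $m\prod_{\frakp\divides(\beta)}\OO_\frakp=\OL{(m\OO)_\f}\subseteq\OL{\Z[\beta]_\f}$, and the left-hand side is already an open finite-index subgroup of the compact group $\prod_\frakp\OO_\frakp$. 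With these repairs the proof is complete.
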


\begin{lemma}\label{lem:H}
The $\beta$-adic expansion map $\h_\f:\Z[\beta]_\f\to\AA^\omega$ defined by
\[
	\h_\f(z)\eqdef u_0u_1u_2\dotsm
,\quad\text{where}\quad
	u_i\eqdef d\bigl(D^i(z)\bigr)
,\]
 is a homeomorphism.
It satisfies that $\h_\f(x_\f)=\h(x)$ for all $x\in\Z[\beta]$.
\end{lemma}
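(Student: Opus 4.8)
The plan is to realise $\h_\f$ as a continuous bijection from the compact space $\OL{\Z[\beta]_\f}$ onto the Hausdorff space $\AA^\omega$, so that it is automatically a homeomorphism, and then to check $\h_\f(x_\f)=\h(x)$ on $\Z[\beta]$ by induction. The first task is to see that $d$, and hence $D$, is well-defined and continuous on all of $\OL{\Z[\beta]_\f}$. For $z\in\Z[\beta]_\f$ the existence of a digit is the relation $\Z[\beta]=\AA+\beta\Z[\beta]$ recalled before the lemma; for a general $z$ in the closure it follows by approximating $z$ by points of $\Z[\beta]_\f$ and using finiteness of $\AA$, so that a single digit is shared along a subsequence. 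Uniqueness is exactly Lemma~\ref{lem:5.2}. Consequently each fibre $d^{\1}(c)=\set[\big]{z\in\OL{\Z[\beta]_\f}}{\beta_\f^{\1}\bigl(z-c_\f\bigr)\in\OL{\Z[\beta]_\f}}$ is closed, and the $\abs{N(\beta)}$ fibres are pairwise disjoint and cover $\OL{\Z[\beta]_\f}$; hence they are clopen, $d$ is locally constant, and $D=\beta_\f^{\1}\bigl(\cdot-d(\cdot)_\f\bigr)$ is continuous. Continuity of $\h_\f$ is then immediate, since each coordinate $u_i=d\bigl(D^i(z)\bigr)$ depends continuously on $z$ and $\AA^\omega$ carries the product topology.

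The core computation is the telescoping relation obtained from $z=(u_0)_\f+\beta_\f D(z)$ by iteration,
\[
	z=\sum_{i=0}^{n-1}(u_i)_\f\,\beta_\f^i+\beta_\f^n D^n(z)\qquad(n\in\N),
\]
with $u_i=d\bigl(D^i(z)\bigr)$. Because multiplication by $\beta_\f$ is a contraction and $\OL{\Z[\beta]_\f}$ is compact, the remainder $\beta_\f^n D^n(z)$ tends to $0$, so $z=\sum_{i\geq0}(u_i)_\f\beta_\f^i=P_{\h_\f(z)}(\beta_\f)$. This recovers $z$ from its digit string, giving injectivity and at the same time the inverse map. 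For surjectivity, given $\bm u\in\AA^\omega$ I set $z\eqdef\sum_{i\geq0}(u_i)_\f\beta_\f^i$; the series converges since the digits are bounded and $\beta_\f^i\to0$, and its partial sums lie in $\Z[\beta]_\f$, so $z\in\OL{\Z[\beta]_\f}$. As $\beta_\f^{\1}\bigl(z-(u_0)_\f\bigr)=\sum_{i\geq0}(u_{i+1})_\f\beta_\f^i\in\OL{\Z[\beta]_\f}$, uniqueness of the digit forces $d(z)=u_0$ and $D(z)=\sum_{i\geq0}(u_{i+1})_\f\beta_\f^i$; iterating yields $\h_\f(z)=\bm u$. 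A continuous bijection from a compact space onto a Hausdorff space is a homeomorphism, which establishes the first assertion.

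Finally, for $x\in\Z[\beta]$ write $\h(x)=u_0u_1u_2\cdots$. By Definition~\ref{def:H} we have $x-u_0\in\beta\Z[\beta]$, so $\beta_\f^{\1}\bigl(x_\f-(u_0)_\f\bigr)=\bigl((x-u_0)/\beta\bigr)_\f\in\Z[\beta]_\f$ with $(x-u_0)/\beta\in\Z[\beta]$; by uniqueness $d(x_\f)=u_0$ and $D(x_\f)=\bigl((x-u_0)/\beta\bigr)_\f$. Hence $D$ maps $\Z[\beta]_\f$ into itself so as to mirror precisely the recursion defining $\h$, and an induction on the index gives $\h_\f(x_\f)=\h(x)$. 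The one genuinely delicate point is the well-definedness of $d$ on the \emph{closure} $\OL{\Z[\beta]_\f}$ rather than on $\Z[\beta]_\f$ alone; this rests entirely on the coset separation supplied by Lemma~\ref{lem:5.2}, after which each remaining step is a short topological or algebraic verification.
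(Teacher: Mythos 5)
Your proof is correct, and its bijectivity part --- recovering $z=P_{\h_\f(z)}(\beta_\f)$ by telescoping $z=d(z)_\f+\beta_\f D(z)$ and checking that $\bm u\mapsto P_{\bm u}(\beta_\f)$ is a right inverse --- is exactly the paper's argument, written out in more detail (the paper phrases injectivity as $z\in\sum_{i=0}^{n-1}u_i\beta_\f^i+\beta_\f^n\Z[\beta]_\f$ for all $n$, which is your telescoping identity). Where you genuinely diverge is in the topology. The paper proves continuity of $\h_\f$ by first showing that $\Z[\beta]_\f$ is \emph{open} in $\Kf$ --- this is a second use of Lemma~\ref{lem:5.2}, via $\Z[\beta^{-1}]_\f=K_\f$ and the openness of $\OO_\f$ --- so that the cylinder preimages $P_v(\beta_\f)+\beta_\f^n\Z[\beta]_\f$ are open; and it obtains continuity of the inverse directly from $\beta_\f^n\Z[\beta]_\f\to\{0_\f\}$. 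You instead note that the finitely many fibres of the digit map $d$ are closed, pairwise disjoint and cover the space, hence clopen, so $d$ and every coordinate of $\h_\f$ are continuous, and you then get continuity of the inverse from compactness of the domain together with the compact-to-Hausdorff principle. Your route invokes Lemma~\ref{lem:5.2} only for uniqueness of the digit and trades the openness of $\Z[\beta]_\f$ in $\Kf$ for its compactness (which the paper records just before the lemma); the paper's route makes the openness explicit and handles the inverse without any compactness argument. Both are sound, and your careful treatment of the existence of $d(z)$ for $z$ in the closure (by passing to a subsequence with constant digit) fills in a point the paper leaves implicit. The verification of $\h_\f(x_\f)=\h(x)$ is the same in both: it reduces to $\beta^{-1}\bigl(x-d(x_\f)\bigr)\in\Z[\beta]$.
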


\begin{proof}
The map $\h_\f$ is surjective because $\h_\f(P_{\bm u}(\beta_\f)) = \bm u$ for all $\bm u\in\AA^\omega$.
It is injective because $\h_\f(z) = \bm u = u_0u_1u_2\dotsm$ implies that
 $z \in \sum_{i=0}^{n-1} u_i \beta_\f^i + \ol{\beta_\f^n \Z[\beta]_\f}$ for all $n$, thus $z = P_{\bm u}(\beta_\f)$.
 
Since $\OO_\f$ is open and $\Z[\beta^{-1}]_\f=K_\f$,
 we get from Lemma~\ref{lem:5.2} that $\Z[\beta]_\f = \bigcup_{x\in\Z[\beta]} x_\f+\beta_\f^k\OO_\f$ for some $k\in\N$,
 and therefore it is an open set as well.
Then the preimage $\h_\f^{-1}(v\AA^\omega) = P_{v}(\beta_\f)+\beta_\f^n\Z[\beta]_\f$ is open for any $v\in\AA^*$.
As the cylinders $\set{v\AA^\omega}{v\in\AA^*}$ form a base of the topology of $\AA^\omega$,
 the map $\h_\f$ is continuous.

The inverse $\h_\f^{-1}$ is continuous because $\beta_\f^n\Z[\beta]_\f\to\{0_\f\}$ as $n\to\infty$.

For $x\in\Z[\beta]$, the equality $\h_\f(x_\f)=\h(x)$ follows from the fact that $\beta^{-1}(x-d(x_\f))\in\Z[\beta]$.
\end{proof}

Note that we can also identify $\Z[\beta]_\f$ with the inverse limit space $\inverselim \Z[\beta]/\beta^n\Z[\beta]$.
Indeed defining the isomorphism
\[
	\kappa\colon\AA^\omega\to\inverselim \Z[\beta]/\beta^n\Z[\beta],
\quad
	u_0u_1u_2\dotsm\mapsto (\xi_1,\xi_2,\xi_3,\dotsc)
,\quad\text{where}\quad
	\xi_n = \sum_{i=0}^{n-1} u_i\beta^i
,\]
 the following diagram commutes:
\[
	\TIKZinclude{gammalgo-tikzeqcd}
\]

\section{\texorpdfstring{Beta-tiles and the value $\gamma(\beta)$}{Beta-tiles and the value gamma(beta)}}\label{sect:tiles}

The goal of this section is to prove Theorems~\ref{thm:gamma-bar} and~\ref{thm:gamma0},
 using the connection between $\beta$-tiles and the value of $\gamma(\beta)$.
First we prove the following lemma about the closures of $\Z$ and $\Z_b$ in $K_\f$:

\begin{lemma}\label{lem:closure}
We have that $\ol{(\Z)_\f}=\ol{(\Z_b)_\f}=\ol{(\Z_b\cap[c,d])_\f}$ for all $c<d$.
\end{lemma}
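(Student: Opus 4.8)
The plan is to argue entirely inside the inverse limit description $\Z[\beta]_\f\cong\varprojlim_n R_n$, where $R_n\eqdef\Z[\beta]/\beta^n\Z[\beta]$ is a finite ring of cardinality $\abs{R_n}=\abs{N(\beta)}^n=b^n$. Note that $\Z$, $\Z_b$ and $\Z_b\cap[c,d]$ all embed into $\Z[\beta]_\f\subseteq K_\f$ (for the rational numbers $p/q$ with $q\perp b$ this uses that $q$ is a $\frakp$-unit at every $\frakp\divides(\beta)$, as in Lemma~\ref{lem:H}). Writing $\pi_n\colon\Z[\beta]_\f\to R_n$ for the canonical projection, the cylinders $\pi_n^{-1}(r)$ are clopen and form a neighbourhood base, so the closure of a set $S$ is exactly the set of $z$ with $\pi_n(z)\in\pi_n(\{x_\f:x\in S\})$ for every $n$. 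In particular, two subsets have the same closure as soon as they induce the same image under every $\pi_n$. Since $\Z\subseteq\Z_b$ and $\Z_b\cap[c,d]\subseteq\Z_b$, one inclusion is trivial in each equality, and only the reverse directions require work.

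The key algebraic input I would isolate first is that $\beta^2=a\beta+b$ gives $b=\beta(\beta-a)$, hence $b^n\in\beta^n\Z[\beta]$ for all $n$. Because $\Z\cap\beta^n\Z[\beta]$ is an ideal $e_n\Z$ of $\Z$ and contains $b^n$, the additive order $e_n$ of $1$ in $R_n$ divides $b^n$; thus every integer $q$ with $q\perp b$ is coprime to $e_n$, so $q$ is a unit in the prime subring $\pi_n(\Z)\cong\Z/e_n\Z$ of $R_n$, and its inverse in $R_n$ already lies in $\pi_n(\Z)$. This immediately yields the first equality: for $x=p/q\in\Z_b$ one has $\pi_n(x_\f)=p\,\pi_n(q)^{\1}\in\pi_n(\Z)$, so $\pi_n(\Z_b)\subseteq\pi_n(\Z)$; the reverse inclusion is clear, hence $\pi_n(\Z)=\pi_n(\Z_b)$ for all $n$ and therefore $\overline{(\Z)_\f}=\overline{(\Z_b)_\f}$.

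For the second equality I would show that every $x\in\Z_b$ lies in $\overline{(\Z_b\cap[c,d])_\f}$, which suffices since this set is closed. Fix a level $N$; I want $z\in\Z_b\cap[c,d]$ with $\pi_N(z_\f)=\pi_N(x_\f)$. The decisive move is to use rational increments $b^N/q'$ with $q'\perp b$ chosen large enough that $b^N/q'<d-c$. Such an increment vanishes under $\pi_N$: indeed $b^N\in\beta^N\Z[\beta]$, while $1/q'\in\Z[\beta]_\f$ and $\beta^N\Z[\beta]_\f=\ker\pi_N$ is an ideal, so $b^N/q'\in\ker\pi_N$. On the other hand the numbers $x+k\,b^N/q'$ ($k\in\Z$) form a real arithmetic progression of step $<d-c$ lying in the ring $\Z_b$, so some $k$ places $z\eqdef x+k\,b^N/q'$ inside $[c,d]$, and by construction $\pi_N(z_\f)=\pi_N(x_\f)$. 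Letting $N\to\infty$ gives $x_\f\in\overline{(\Z_b\cap[c,d])_\f}$, completing the proof.

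The main obstacle is precisely this last step: an increment that is trivial at the finite places is forced, as an integer, to have real size at least $e_n\geq b$, which can exceed $d-c$, so integer shifts alone cannot reach an arbitrarily short interval. The coprimality $q'\perp b$ is exactly what decouples the infinite place from the finite ones, letting the real size $b^N/q'$ be made small while the $\frakp$-adic contribution stays divisible by $\beta^N$. Everything else is bookkeeping with the finite rings $R_n$ and the clopen neighbourhood base, so I expect no serious difficulty beyond verifying the divisibility $e_n\mid b^n$ and the ideal property used to place $b^N/q'$ in $\ker\pi_N$.
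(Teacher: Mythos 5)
Your proof is correct, and it is a mix of the paper's route and a genuinely more self-contained one. For the equality $\overline{(\Z)_\f}=\overline{(\Z_b)_\f}$ you do essentially what the paper does: the paper picks $q_n$ with $q_nq\equiv1\pmod{b^n}$ and notes that $\frac pq-q_np=(1-q_nq)\frac pq\in\frac1q b^n\Z\subseteq\frac1q\beta^n\Z[\beta]$, so $(q_np)_\f\to(p/q)_\f$; your version is the same computation phrased in the inverse limit $\varprojlim\Z[\beta]/\beta^n\Z[\beta]$, via the observation that $q\perp b$ makes $q$ a unit in $\Z/e_n\Z$ with $e_n\divides b^n$. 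Where you genuinely diverge is the equality $\overline{(\Z_b)_\f}=\overline{(\Z_b\cap[c,d])_\f}$: the paper simply cites \cite[Lemma~4.7]{ABBS_2008}, whereas you prove it directly by perturbing $x\in\Z_b$ by multiples of $b^N/q'$ with $q'\perp b$ chosen so large that $b^N/q'<d-c$ --- an increment that is annihilated by $\pi_N$ (since $b^N\in\beta^N\Z[\beta]$ and $\ker\pi_N$ is an ideal of $\overline{\Z[\beta]_\f}$) yet arbitrarily small at the real place, so the arithmetic progression meets $[c,d]$. This is the right idea and buys a self-contained lemma at the cost of some bookkeeping. Two small points to tidy up: you should justify that $(p/q)_\f$ and $(1/q')_\f$ lie in $\overline{\Z[\beta]_\f}$ \emph{before} applying $\pi_n$ to them and before invoking the ideal property of $\ker\pi_N$ --- both follow from the congruence $qq'\equiv1\pmod{b^n}$ you already use, so the fix is only a matter of ordering, not of substance.
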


\begin{proof}
We have that $\ol{(\Z_b)_\f}=\ol{(\Z_b\cap[c,d])_\f}$ by \cite[Lemma 4.7]{ABBS_2008}.
Clearly $\Z\subseteq\Z_b$ whence $(\Z)_\f\subseteq(\Z_b)_\f$.
We will prove that $(\Z_b)_\f\subseteq\ol{(\Z)_\f}$,
 namely that every point $x/q\in\Z_b$ for $x,q\in\Z$ and $q\perp b$ can be approximated by integers.
For each $n\in\N$, there exists $q_n\in\Z$ such that $q_n q\equiv1\pmod{b^n}$.
Then
\(
	\frac{x}{q}-q_nx
	= (1-q_nq)\frac{x}{q}
	\in \frac1q b^n\Z \subseteq \frac1q \beta^n\Z[\beta]
,\)
 therefore $(q_nx)_\f\to(x/q)_\f$.
\end{proof}

\begin{proof}[of Theorem~\ref{thm:gamma-bar}]
By Definition~\ref{def:gamma}, Theorem~\ref{thm:Pur-XX} and since $\delta(1)\notin\XX$,
 we have that
\[
	\gamma(\beta)=\inf\set[\big]{x\in\Z_b}{x\geq0 ,\, \delta(x)\notin\XX}
.\]
For $x\in\Q\cap[0,\beta-a)$, the condition $\delta(x)\in\XX$ is equivalent to $\delta'(x)\in\QQ(0)$;
 for $x\in\Q\cap[\beta-a,1)$, it is equivalent to $\delta'(x)\in\QQ(\beta-a)$.

\begin{figure}
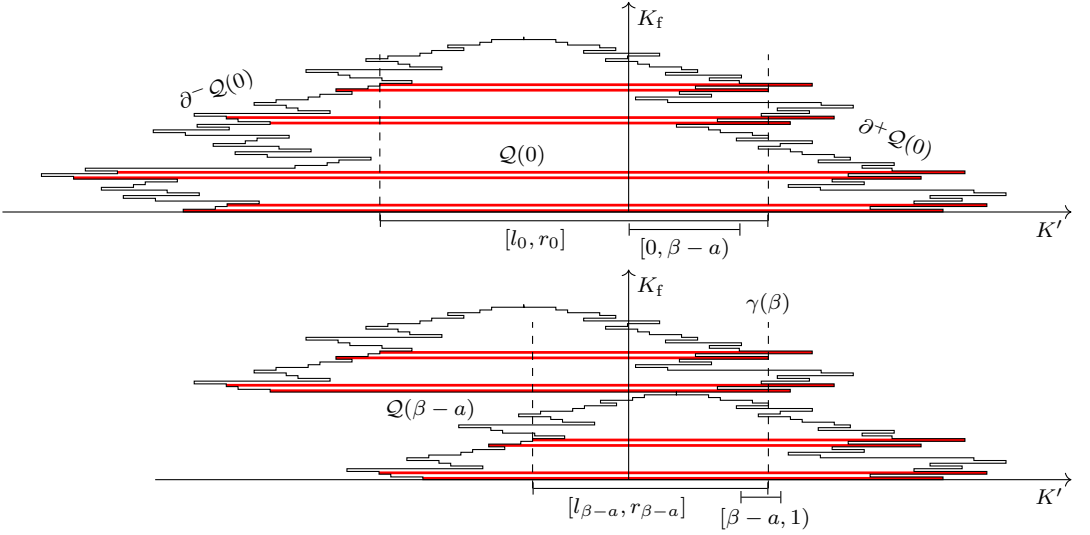

\centering
\iffiif\else\footnotesize\fi% NOT IN FIIF
\oalign{
\TIKZinclude{gammalgo-tikzyyyy}%
\cr\hfil
\TIKZinclude{gammalgo-tikzyyzz}%
}
\caption{The tiles $\QQ(0)$ and $\QQ(\beta-a)$ for $\beta=1+\sqrt3$.
 The (red) stripes illustrate the intersection of $Y=K'\times (\Z)_\f$ with the tiles.}
\label{fig:intervals}
\end{figure}

We recall the results of \cite[\S9.3]{minervino_steiner_2014}, where the shape of the tiles is described.
The intersection of $\QQ(x)$ with a line $K'\times\{z\}$ is a line segment for any $z\in\ol{\Z[\beta]_\f}$ and it is empty
 for all $z\in\Kf\setminus \ol{\Z[\beta]_\f}$, see Figure~\ref{fig:intervals}.
Let $\partial^-\QQ(x)$ denote the set of the segments' left end-points, and similarly $\partial^+\QQ(x)$
 the set of the right end-points. %, see Figure~\ref{fig:boundary}.
Put
\[
	l_x\eqdef\sup\pi'(\delta^-\QQ(x)\cap Y)
\quad\text{and}\quad
	r_x\eqdef\inf\pi'(\delta^+\QQ(x)\cap Y)
\quad\text{for $x=0,\beta-a$}
,\]
 where $Y\eqdef K'\times(\Z_b)_\f$ and $\pi'$ denotes the projection $\pi'\colon K'\times \Kf\to K'$, $(y,z)\mapsto y$.
Then all numbers $p/q\in\Z_b$ in $[l_0,r_0]\cap[0,\beta-a)$ have a purely periodic expansion,
 and so do all numbers $p/q\in\Z_b$ in $[l_{\beta-a},r_{\beta-a}]\cap[\beta-a,1)$.
Outside these two sets, numbers $p/q\in\Z_b$ that do not have a purely periodic expansion are dense,
 since the points $\delta'(p/q)$ are dense in $Y$ by Lemma~\ref{lem:closure}.
Therefore, the value of $\gamma(\beta)$ depends on the relative position of the above intervals
 (see Figure~\ref{fig:intervals}) in the following way:
\begin{equation}\label{eq:gamma-Y}
	\gamma(\beta) = \begin{cases}
		0
		&
		\text{if $l_0>0$ or $r_0<0$,}
	\\
		r_0
		&
		\text{if $l_0\leq0$ and $r_0\in[0,\beta-a)$,}
	\\
		\beta-a
		&
		\text{if $l_0\leq0$, $r_0\geq\beta-a$ and $\beta-a\notin[l_{\beta-a},r_{\beta-a}]$,}
	\\
		\min\{r_{\beta-a},1\}
		&
		\text{if $l_0\leq0$, $r_0\geq\beta-a$ and $\beta-a\in[l_{\beta-a},r_{\beta-a}]$.}
	\end{cases}
\end{equation}

In the rest of the proof, we will show that
\begin{equation}\label{eq:lr}
	l_0 = l_{\beta-a}-1 = - \beta + \sup_{j\in\Z} P_{\h(j-\beta)}(\beta')
\quad\text{and}\quad
	r_0 = r_{\beta-a} = 1 + \inf_{j\in\Z} P_{\h(j)}(\beta')
.\end{equation}
As $\inf_{j\in\Z} P_{\h(j)}(\beta')\leq P_{\h(0)}(\beta')=0$,
 we see that \eqref{eq:gamma-Y} implies the statement of the theorem.

\begin{figure}
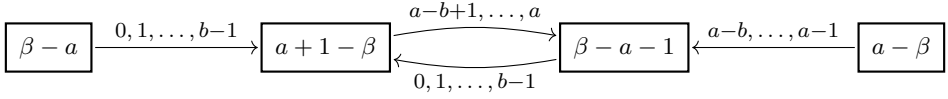

\centering
\TIKZinclude{gammalgo-tikzauto}
\caption{Boundary graph for quadratic $\beta$-tiles, cf.~{\normalfont\protect\cite[Fig.~6]{minervino_steiner_2014}}.
 Each arrow in the graph represents exactly $b$~edges.}
\label{fig:graph}
\end{figure}

We use results of~\cite[\S\S8.3, 9.2 and 9.3]{minervino_steiner_2014}, namely Equations~(8.4) and~(9.2), which read:
\[
	z\in\RR(x)\cap\RR(y)
\quad\text{if and only if}\quad
	z = \delta'(x) + P_{\bm u}(\delta'(\beta))
,\]
 where $\bm u = v_0v_1v_2\dotsm$ is an edge-labelling of a path in the boundary graph in Figure~\ref{fig:graph}
 that starts in the node $y-x$;
 and
\[
	\partial\RR(x) =
		\Bigl( \RR(x) \cap \RR\bigl(x+\beta-\qfl{x+\beta}\bigr)\Bigr)
	\cup
		\Bigl( \RR(x) \cap \RR\bigl(x-\beta-\qfl{x-\beta}\bigr)\Bigr)
,\]
 where the first part is the left boundary $\RR^-(x)$ and the second part is the right boundary $\RR^+(x)$.
Therefore
\begin{gather*}
	\partial^-\RR(0)
	= \partial^+\RR(\beta-a)
	= \RR(0) \cap \RR(\beta-a)
	 = \set[\big]{ P_{\bm u}(\delta'(\beta)) }{ \bm u\in (\AA\BB)^\omega }
,\\
	\partial^+\RR(0)
	= \RR(a+1-\beta) \cap \RR(0)
	 = \set[\big]{ \delta'(a+1-\beta)+P_{\bm u}(\delta'(\beta)) }{ \bm u\in (\AA\BB)^\omega }
,\\
	\partial^-\RR(\beta-a)
	= \RR(\beta-a) \cap \RR(2\beta-\qfl{2\beta})
	 = \set[\big]{ \delta'(\beta-a) + P_{\bm u}(\delta'(\beta)) }{ \bm u\in(\AA\BB)^\omega }
,\end{gather*}
 where we put $\BB\eqdef\{a{-}b{+}1,a{-}b{+}2,\dots,a\}$.
We have that
\begin{align*}
	\set[\big]{P_{\bm u}(\delta'(\beta))}{\bm u\in(\AA\BB)^\omega}
	&= \set[\big]{P_{\per{(b-1)a}}(\delta'(\beta)) - P_{\bm u}(\delta'(\beta))}{\bm u\in\AA^\omega}
\\
	&= -\delta'(1)-\set[\big]{P_{\bm u}(\delta'(\beta))}{\bm u\in\AA^\omega}
,\end{align*}
 since $\AA=b-1-\AA$ and $\BB=a-\AA$.
Because $\QQ(x)=\delta'(x)-\RR(x)$, we have $\partial^{\pm}\QQ(x) = \delta'(x)-\partial^{\mp}\RR(x)$.
We obtain
\begin{gather*}
	\partial^-\QQ(0)
	= \delta'(\beta-a)+\set[\big]{P_{\bm u}(\delta'(\beta))}{\bm u\in\AA^\omega}
,\\
	\partial^-\QQ(\beta-a)
	= \delta'(\beta-a+1)+\set[\big]{P_{\bm u}(\delta'(\beta))}{\bm u\in\AA^\omega}
,\\
	\partial^+\QQ(0) = \partial^+\QQ(\beta-a)
	= \delta'(1)+\set[\big]{P_{\bm u}(\delta'(\beta))}{\bm u\in\AA^\omega}
.\end{gather*}

We have that
\[
	\delta'(1)+P_{\bm u}(\delta'(\beta))\in Y
\iff
	1_\f + P_{\bm u}(\beta_\f) \in \Z_\f
\iff
	P_{\bm u}(\beta_\f)\in \Z_\f
\iff
	\bm u\in\h_\f(\Z_\f) % CORR: \h(\Z_\f) 
,\]
 because $\h_\f(P_{\bm u}(\beta_\f))=\bm u$ and $\h_\f$ is a homeomorphism by Lemma~\ref{lem:H}.
Then, since the map $\Z_\f\to K'$, $z\mapsto P_{\h_\f(z)}(\beta')$ is continuous, we get that
\[
	\inf \pi'\bigl(\partial^+\QQ(x)\cap Y\bigr)
	= 1 + \inf_{z\in\Z_\f} P_{\h_\f(z)}(\beta')
	= 1 + \inf_{j\in\Z} P_{\h(j)}(\beta')
.\]
Similarly, $\delta'(\beta-a)+P_{\bm u}(\delta'(\beta)) \in Y$ if and only if $\bm u\in\h_\f(\Z_\f-\beta_\f)$, therefore
\[
	\sup \pi'\bigl(\partial^-\QQ(\beta-a)\cap Y\bigr)-1
	= \sup \pi'\bigl(\partial^-\QQ(0)\cap Y\bigr)
	= \beta'-a+\sup_{j\in\Z} P_{\h(j-\beta)}(\beta')
.\]
Since $\beta'-a=-\beta$, this justifies~\eqref{eq:lr}.
\end{proof}

\begin{proof}[of Theorem~\ref{thm:gamma0}, case $a>\frac{1+\sqrt5}{2}b$]
Since $\beta'<0$, we have that 
\[
	\sup_{j\in\Z} P_{\h(j-\beta)}(\beta')
	\leq \sup_{\bm u\in\AA^\omega} P_{\bm u}(\beta')
	= P_{\per{(b{-}1)0}}(\beta')
	= \frac{b-1}{1-(\beta')^2}
.\]
We will show that this quantity is $<2\beta-a-1$.
First, we derive, using $(\beta')^2=a\beta'+b$, $\beta=a-\beta'$ and $1-(\beta')^2>0$, that it is equivalent to
\begin{equation}\label{eq:ffff}
	a+ab+\beta'(a^2+a+2b-2) > 0
.\end{equation}
We know that $\beta<a+1$, therefore $\beta=a+\frac{b}{\beta}>\frac{a(a+1)+b}{a+1}$ and
 $\beta'=-\frac{b}{\beta}>-\frac{(a+1)b}{a^2+a+b}$.
As well, $a^2+a+2b-2>0$, therefore we estimate
\[
	a+ab+\beta'(a^2+a+2b-2) > \frac{ab^2((\frac ab)^2-\frac ab-1)+b^2((\frac ab)^2+2\frac ab-2)+2b}{a^2+a+b}
.\]
When $\frac ab>\frac{1+\sqrt5}{2}$, all three terms in the numerator are positive.
Since the denominator is also positive, we get that $\sup_{j\in\Z} P_{\h(j-\beta)}(\beta') < 2\beta-a-1$.
Theorem~\ref{thm:gamma-bar} then implies~\eqref{eq:gamma0}.
\end{proof}

The proof of the case $a\perp b$ of Theorem~\ref{thm:gamma0} was given in~\cite[\S9]{minervino_steiner_2014}.
The proof of the case $a=b$ is given in the next section on page~\pageref{pf:gamma0:a=b},
 because it falls under the case when $b$ divides $a$.

\medskip

The following proposition shows how to compute the infimum in Theorem~\ref{thm:gamma0}
 and thus the value of $\gamma(\beta)$ in a lot of (and possibly all) cases.
Comments on the computation of $\gamma(\beta)$ by Theorem~\ref{thm:gamma-bar} are in Section~\ref{sect:general}.
We recall that $\Pref[n]{\bm u}$ denotes the prefix of $\bm u$ of length $n$.

\begin{proposition}\label{prop:estim}
Let $\beta^2=a\beta+b$ with $a\geq b\geq2$.
Then for each $n\in\N$ we have
\begin{equation}\label{eq:estim}
	\inf_{j\in\Z} P_{\h(j)}(\beta')
%	\in \min \set[\big]{P_{\Pref[n]{\h(j)}}(\beta')}{j\in\{0,1,\dots,b^n-1\}}
	\in \min_{\!j\in\{0,1,\dots,b^n-1\}\!} P_{\Pref[n]{\h(j)}}(\beta')
%	\in \min_{j\in\Z\cap[0,b^n)} P_{\Pref[n]{\h(j)}}(\beta')
	+ (\beta')^n\tfrac{b-1}{1-(\beta')^2}[\beta',1]
.\end{equation}
\end{proposition}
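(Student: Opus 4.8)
The plan is to compute $P_{\h(j)}(\beta')$ by separating the first $n$ digits from the tail, using a periodicity of the length-$n$ prefix together with a uniform bound on the tail.

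\emph{Periodicity of the prefix.} First I would observe that, by Definition~\ref{def:H}, the prefix $\Pref[n]{\h(j)}$ is the unique word $u_0u_1\cdots u_{n-1}$ over $\AA=\{0,1,\dots,b-1\}$ with $j\equiv\sum_{i=0}^{n-1}u_i\beta^i\pmod{\beta^n\Z[\beta]}$; hence it depends only on the class of $j$ in $\Z[\beta]/\beta^n\Z[\beta]$. Since $b=\beta(\beta-a)\in\beta\Z[\beta]$, we have $b^n\in\beta^n\Z[\beta]$, so $\Pref[n]{\h(j)}$ in fact depends only on $j\bmod b^n$. Consequently every $j\in\Z$ shares its prefix with some $j'\in\{0,1,\dots,b^n-1\}$, and the finite quantity
\[
	m_n\eqdef\min_{0\le j<b^n}P_{\Pref[n]{\h(j)}}(\beta')
\]
equals $\inf_{j\in\Z}P_{\Pref[n]{\h(j)}}(\beta')$; in particular $P_{\Pref[n]{\h(j)}}(\beta')\ge m_n$ for every $j\in\Z$.

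\emph{Tail estimate.} Next I would bound the tail uniformly. For any infinite word $\bm w=w_0w_1\cdots\in\AA^\omega$, using $\beta'\in(-1,0)$ and estimating $\sum_{i\ge0}w_i(\beta')^i$ termwise (taking $w_i=b-1$ where $(\beta')^i$ has the favourable sign and $w_i=0$ otherwise), I get
\[
	P_{\bm w}(\beta')\in\Bigl[\tfrac{(b-1)\beta'}{1-(\beta')^2},\ \tfrac{b-1}{1-(\beta')^2}\Bigr]=\tfrac{b-1}{1-(\beta')^2}[\beta',1].
\]
This uses only $|\beta'|<1$ and holds for \emph{all} words, so no achievability of the extremal words is needed.

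\emph{Combining.} Let $\bm w_j\in\AA^\omega$ be the word obtained from $\h(j)$ by deleting its first $n$ letters, so that
\[
	P_{\h(j)}(\beta')=P_{\Pref[n]{\h(j)}}(\beta')+(\beta')^n P_{\bm w_j}(\beta'),
\]
and by the tail estimate $(\beta')^nP_{\bm w_j}(\beta')\in I\eqdef(\beta')^n\tfrac{b-1}{1-(\beta')^2}[\beta',1]$. For the lower bound, each $j$ satisfies $P_{\Pref[n]{\h(j)}}(\beta')\ge m_n$ and $(\beta')^nP_{\bm w_j}(\beta')\ge\min I$, whence $\inf_{j\in\Z}P_{\h(j)}(\beta')\ge m_n+\min I$. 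For the upper bound, choosing $j^*\in\{0,\dots,b^n-1\}$ with $P_{\Pref[n]{\h(j^*)}}(\beta')=m_n$ gives $\inf_{j\in\Z}P_{\h(j)}(\beta')\le P_{\h(j^*)}(\beta')=m_n+(\beta')^nP_{\bm w_{j^*}}(\beta')\le m_n+\max I$. Together these place the infimum in $m_n+I$, which is exactly \eqref{eq:estim}. The only real content is the prefix-periodicity step; once $\Pref[n]{\h(j)}$ is seen to factor through $\Z/b^n\Z$ via $b^n\in\beta^n\Z[\beta]$, the rest is elementary interval arithmetic. The one place to be careful is the sign of $(\beta')^n$: multiplication by $(\beta')^n$ reverses the orientation of the tail interval for odd $n$, so $\min I$ and $\max I$ must be read as the genuine endpoints of $I$ rather than as $(\beta')^n$ times the respective endpoints of $[\beta',1]$.
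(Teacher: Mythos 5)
Your proof is correct and follows essentially the same route as the paper's: reduce to $j\in\{0,\dots,b^n-1\}$ via the observation that $b^n\in\beta^n\Z[\beta]$ forces $\Pref[n]{\h(j)}$ to depend only on $j\bmod b^n$ (the paper's Lemma~\ref{lem:pref-n}), then bound the tail contribution $(\beta')^nP_{\bm w_j}(\beta')$. The paper merely phrases the tail bounds via the extremal periodic words $\per{0(b{-}1)}$ and $\per{(b{-}1)0}$ with a case split on the parity of $n$, which is the same interval arithmetic you carry out, including your (correct) care about the orientation reversal when $(\beta')^n<0$.
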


\begin{lemma}\label{lem:pref-n}
Let $x,y\in\Z[\beta]$ satisfy that $x-y\in b^n\Z[\beta]$.
Then $\Pref[n]{\h(x)} = \Pref[n]{\h(y)}$.
\end{lemma}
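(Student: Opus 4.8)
The plan is to reduce divisibility by $b^n$ to divisibility by $\beta^n$ and then to exploit the uniqueness built into Definition~\ref{def:H}; throughout, note that $\abs{N(\beta)}=b$, so the digit alphabet is $\{0,1,\dots,b-1\}$. The starting observation is that $b=\beta^2-a\beta=\beta(\beta-a)$ lies in $\beta\Z[\beta]$, whence $b^n=\beta^n(\beta-a)^n\in\beta^n\Z[\beta]$ and therefore $b^n\Z[\beta]\subseteq\beta^n\Z[\beta]$. Thus the hypothesis $x-y\in b^n\Z[\beta]$ immediately upgrades to $x-y\in\beta^n\Z[\beta]$, which is exactly the form of divisibility that the $\beta$-adic digits are built to detect.

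Writing $\h(x)=u_0u_1u_2\dotsm$ and $\h(y)=v_0v_1v_2\dotsm$, Definition~\ref{def:H} gives $x-\sum_{i=0}^{n-1}u_i\beta^i\in\beta^n\Z[\beta]$ and $y-\sum_{i=0}^{n-1}v_i\beta^i\in\beta^n\Z[\beta]$. Subtracting and using $x-y\in\beta^n\Z[\beta]$ from the first step, I obtain $\sum_{i=0}^{n-1}(u_i-v_i)\beta^i\in\beta^n\Z[\beta]$. Hence $\Pref[n]{\h(x)}=\Pref[n]{\h(y)}$ reduces to the following claim: a digit polynomial $\sum_{i=0}^{n-1}c_i\beta^i$ whose coefficients $c_i=u_i-v_i$ lie in $\{-(b-1),\dots,b-1\}$ can belong to $\beta^n\Z[\beta]$ only if every $c_i$ vanishes.

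This uniqueness claim is the crux and the step I expect to be the main obstacle. I would argue by inspecting the lowest nonzero coefficient: assuming some $c_i\neq0$, let $j$ be minimal with $c_j\neq0$. Since $\Z[\beta]$ is a domain, dividing $\sum_{i=j}^{n-1}c_i\beta^i\in\beta^n\Z[\beta]$ by $\beta^j$ yields $\sum_{i=0}^{n-1-j}c_{i+j}\beta^i\in\beta^{n-j}\Z[\beta]\subseteq\beta\Z[\beta]$. The terms with $i\ge1$ obviously lie in $\beta\Z[\beta]$, so the constant term satisfies $c_j\in\Z\cap\beta\Z[\beta]$. To finish I would use that $\{1,\beta\}$ is a $\Z$-basis of $\Z[\beta]$ together with $\beta(s+t\beta)=tb+(s+ta)\beta$ for $s,t\in\Z$, which gives $\beta\Z[\beta]=b\Z\oplus\Z\beta$ and therefore $\Z\cap\beta\Z[\beta]=b\Z$. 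Then $c_j\in b\Z$ with $0<\abs{c_j}\le b-1$ is impossible, the desired contradiction; so all $c_i=0$ and the two length-$n$ prefixes coincide.

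A fully equivalent alternative would be an induction on $n$, tracking the digit map $D$ in the coordinates $s+t\beta\mapsto(s,t)$ and showing that $x-y\in b^k\Z[\beta]$ forces the leading digits to agree and $D(x)-D(y)\in b^{k-1}\Z[\beta]$. The same computation of $\beta\Z[\beta]$ underlies this variant, so the essential difficulty—understanding $\Z[\beta]/\beta\Z[\beta]\cong\Z/b\Z$—is unchanged either way.
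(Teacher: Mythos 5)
Your proof is correct and follows the same route as the paper: observe $b=\beta(\beta-a)\in\beta\Z[\beta]$ so that $x-y\in\beta^n\Z[\beta]$, then conclude from the defining property of the digits that the length-$n$ prefixes agree. The only difference is that you re-derive the uniqueness of the digit string (via $\Z\cap\beta\Z[\beta]=b\Z$, i.e. $\Z[\beta]/\beta\Z[\beta]\cong\Z/b\Z$), whereas the paper simply invokes the uniqueness already asserted in Definition~\ref{def:H}; your extra verification is sound but not a different method.
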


\begin{proof}
Since $b=\beta^2-a\beta\in\beta\Z[\beta]$, we have that $x-y\in\beta^n\Z[\beta]$.
Let $\h(x)=u_0u_1\dotsm $.
Then $x-\sum_{j=0}^{n-1} u_j\beta^j\in\beta^n\Z[\beta]$ and therefore $y-\sum_{j=0}^{n-1} u_j\beta^j\in\beta^n\Z[\beta]$,
 which means that $u_0\dotsm u_{n-1}$ is a~prefix of $\h(y)$.
\end{proof}

\begin{proof}[of Proposition~\ref{prop:estim}]
Set $\mu_n\eqdef \min_{j\in\{0,1,\dots,b^n-1\}} P_{\Pref[n]{\h(j)}}(\beta')$.
The statement actually consists of two inequalities, which will be proved separately.
Let $j\in\Z$.
Since $\Pref[n]{\h(j)}=\Pref[n]{\h(j\omod b^n)}$ by Lemma~\ref{lem:pref-n}
 and since $\beta'<0$, we have
\begin{alignat*}{2}&
	P_{\h(j)}(\beta')
	\geq	P_{\Pref[n]{\h(j)}\per{0(b{-}1)}}(\beta')
	\geq \mu_n + (\beta')^{n+1}\tfrac{b-1}{1-(\beta')^2}
&\quad&%\quad
	\text{if $n$ is even,}
\\&
	P_{\h(j)}(\beta')
	\geq	P_{\Pref[n]{\h(j)}\per{(b{-}1)0}}(\beta')
	\geq \mu_n + (\beta')^{n}\tfrac{b-1}{1-(\beta')^2}
&&%\quad
	\text{if $n$ is odd.}
\intertext{%\end{align*}
To prove the other inequality, let $k\in\{0,\dots,b^n-1\}$ be such that $\mu_n=P_{\Pref[n]{\h(k)}}(\beta')$.
Then
}&%\begin{align*}&
	P_{\h(k)}(\beta')
	\leq P_{\Pref[n]{\h(k)}\per{(b{-}1)0}}(\beta')
	= \mu_n + (\beta')^{n}\tfrac{b-1}{1-(\beta')^2}
&&%\quad
	\text{if $n$ is even,}
\\&
	P_{\h(k)}(\beta')
	\leq P_{\Pref[n]{\h(k)}\per{0(b{-}1)}}(\beta') = \mu_n + (\beta')^{n+1}\tfrac{b-1}{1-(\beta')^2}
&&%\quad
	\text{if $n$ is odd;}
%\qedhere
\end{alignat*}
 this provides the upper bound on the infimum. % CORR: ADDED.
%\iffiif\par\vspace*{-1.6\baselineskip}\leavevmode\fi % ONLY FIIF
\end{proof}
%\iffiif\smallskip\fi % ONLY FIIF

\section{The case \texorpdfstring{$b$}{b} divides \texorpdfstring{$a$}{a}}\label{sect:b-div-a}

In this section, we aim to prove Theorem~\ref{thm:abZ},
 which deals with the particular case when $b$ divides~$a$.
Table~\ref{tab:star} shows whether $\gamma(\beta)$ is $0$, $1$ or strictly in between,
 for $b\leq12$ and $a/b\leq15$.
The first non-trivial values are listed in Table~\ref{tab:num}.
The algorithm for obtaining these values is deduced from Theorem~\ref{thm:gamma0}
 (which covers all the cases when $\frac ab\in\Z$
 since then either $a=b$ or $a\geq 2b>\frac{1+\sqrt5}{2}b$), and the following proposition,
 which improves the statement of Proposition~\ref{prop:estim}.

\begin{table}[t]
\centering\normalsize
\newcommand*\x[1]{\multicolumn{1}{@{}>$c<$@{}}{#1}}
\begin{tabular}{>$r<$|*{15}{@{}>{\hbox to 14.4pt\bgroup\hfil\rule[-3.8pt]{0pt}{14.4pt}$}c<{$\hfil\egroup}@{}|}>$r<$}
\multicolumn{1}{c}{} & \x{\mathllap{a/b={}}1} & \x2 & \x3 & \x4 & \x5 & \x6 & \x7 & \x8 & \x9 & \x{10} & \x{11} & \x{12} & \x{13} & \x{14} & \x{15} & \phantom{b=1}\\\cline{2-16}
b=1& 1 & 1 & 1 & 1 & 1 & 1 & 1 & 1 & 1 & 1 & 1 & 1 & 1 & 1 & 1 \\\cline{2-16}
 2 & \s& 1 & 1 & 1 & 1 & 1 & 1 & 1 & 1 & 1 & 1 & 1 & 1 & 1 & 1 \\\cline{2-16}
 3 & 0 & \s& 1 & 1 & 1 & 1 & 1 & 1 & 1 & 1 & 1 & 1 & 1 & 1 & 1 \\\cline{2-16}
 4 & 0 & \s& \s& 1 & 1 & 1 & 1 & 1 & 1 & 1 & 1 & 1 & 1 & 1 & 1 \\\cline{2-16}
 5 & 0 & \s& \s& \s& 1 & 1 & 1 & 1 & 1 & 1 & 1 & 1 & 1 & 1 & 1 \\\cline{2-16}
 6 & 0 & \s& \s& 1 & 1 & 1 & 1 & 1 & 1 & 1 & 1 & 1 & 1 & 1 & 1 \\\cline{2-16}
 7 & 0 & \s& \s& \s& \s& \s& 1 & 1 & 1 & 1 & 1 & 1 & 1 & 1 & 1 \\\cline{2-16}
 8 & 0 & \s& \s& \s& \s& \s& \s& 1 & 1 & 1 & 1 & 1 & 1 & 1 & 1 \\\cline{2-16}
 9 & 0 & \s& \s& \s& \s& \s& \s& \s& 1 & 1 & 1 & 1 & 1 & 1 & 1 \\\cline{2-16}
10 & 0 & \s& \s& \s& \s& \s& \s& \s& \s& 1 & 1 & 1 & 1 & 1 & 1 \\\cline{2-16}
11 & 0 & 0 & \s& \s& \s& \s& \s& \s& \s& \s& 1 & 1 & 1 & 1 & 1 \\\cline{2-16}
12 & 0 & 0 & \s& \s& \s& \s& \s& \s& \s& \s& \s& 1 & 1 & 1 & 1 \\\cline{2-16}
\end{tabular}\\*[1ex]
\caption{The values of $\gamma(\beta)$ for the case when $b$ divides $a$. The star `$\s$' means that the value is strictly between $0$ and $1$.}
\label{tab:star}
\end{table}

\begin{proposition}\label{prop:estim-2}
Let $\beta^2=a\beta+b$ with $a\geq b\geq2$ and $\frac ab\in\Z$.
Then for each $n\in\N$ we have
\begin{equation*}%\label{eq:estim-2}
	\inf_{j\in\Z} P_{\h(j)}(\beta')
	\in \min_{j\in\{0,1,\dots,b^n-1\}} P_{\Pref[2n]{\h(j)}}(\beta')
	+ (\beta')^{2n}\tfrac{b-1}{1-(\beta')^2}[\beta',0]
.\end{equation*}
\end{proposition}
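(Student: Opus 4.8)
The plan is to mirror the structure of the proof of Proposition~\ref{prop:estim}, but to exploit the divisibility hypothesis $\frac ab\in\Z$ to sharpen the estimate at each of the two steps (the lower bound on the general $P_{\h(j)}(\beta')$ and the upper bound coming from a minimizing prefix). As in that proof, I would set $\mu_n\eqdef \min_{j\in\{0,1,\dots,b^n-1\}} P_{\Pref[2n]{\h(j)}}(\beta')$ and prove the two containment inequalities separately. The key structural fact carried over from Proposition~\ref{prop:estim} is Lemma~\ref{lem:pref-n}: since $\Pref[2n]{\h(j)}=\Pref[2n]{\h(j\omod b^{2n})}$ and, more to the point, since the prefix of length $2n$ only depends on $j$ modulo $b^{2n}$, we may reduce to representatives, and it suffices to understand which length-$2n$ words actually occur as $\Pref[2n]{\h(j)}$.

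The heart of the improvement should be a combinatorial restriction on the admissible digits supplied by the assumption $b\divides a$. I would first isolate the following claim: when $\frac ab\in\Z$, every length-$2n$ prefix $\Pref[2n]{\h(j)}$ extends to a $\beta$-adic expansion whose tail can be chosen to lie below $\mu_n$ by at most the even-tail amount, i.e.\ one can always append a tail bounded above by the periodic word realizing $(\beta')^{2n}\frac{b-1}{1-(\beta')^2}\cdot 0$ rather than the worse $[\beta',1]$ bound of Proposition~\ref{prop:estim}. Concretely, for the lower bound I would show that for every $j$,
\[
	P_{\h(j)}(\beta')
	\geq P_{\Pref[2n]{\h(j)}\per{(b-1)0}}(\beta')
	\geq \mu_n + (\beta')^{2n}\tfrac{b-1}{1-(\beta')^2}\,\beta'
,\]
using $\beta'<0$ and the fact that with prefix length $2n$ (even) the worst continuation is $\per{(b-1)0}$. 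For the upper bound I would take $k$ attaining $\mu_n$ and append the all-zero continuation, which is admissible precisely because $b\divides a$ forces the digit $0$ to be extendable after any even-length prefix; this yields
\[
	P_{\h(k)}(\beta')
	\leq P_{\Pref[2n]{\h(k)}\per{0}}(\beta')
	= \mu_n
,\]
placing the infimum in the interval $\mu_n + (\beta')^{2n}\tfrac{b-1}{1-(\beta')^2}[\beta',0]$, as asserted.

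The main obstacle I anticipate is justifying the admissibility of the all-zero (equivalently, the $\per{0}$) continuation after a minimizing prefix of even length, which is exactly where $b\divides a$ must enter and where Proposition~\ref{prop:estim} had to settle for the weaker right endpoint $1$. The point is that the digit map $d$ and the doubling to length $2n$ should cooperate so that, after an even number of steps, the state in $\inverselim \Z[\beta]/\beta^n\Z[\beta]$ reached from a minimizing $j$ admits $0$ as a legitimate next digit; equivalently $D^{2n}$ applied to the relevant element stays in $\ol{\Z[\beta]_\f}$ with digit $0$ available. I would verify this by a direct computation with the digit set $\AA=\{0,\dots,b-1\}$ and the relation $b=\beta^2-a\beta$, tracking $\beta$-adic digits in blocks of two and using $\frac ab\in\Z$ to see that the carry behaves integrally. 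Once that admissibility lemma is in place, the rest is a routine repetition of the geometric-series estimates already used in Proposition~\ref{prop:estim}, and the two inequalities combine to give the stated containment.
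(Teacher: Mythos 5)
Your lower bound is essentially the paper's (modulo the slip that the worst-case continuation after an even position is $\per{0(b{-}1)}$, not $\per{(b{-}1)0}$; the final displayed bound $\mu_n+(\beta')^{2n+1}\tfrac{b-1}{1-(\beta')^2}$ is nevertheless the correct one). The upper bound, however, has a genuine gap, and it sits exactly at the point you flagged as the anticipated obstacle. The chain $P_{\h(k)}(\beta')\leq P_{\Pref[2n]{\h(k)}\per{0}}(\beta')=\mu_n$ fails on both counts. First, the left inequality is false for the fixed integer $k$: the tail of $\h(k)$ begins at the even position $2n$, so it contributes $(\beta')^{2n}P_{\mathrm{tail}}(\beta')$, which can be strictly positive (up to $(\beta')^{2n}\tfrac{b-1}{1-(\beta')^2}$). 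Second, the intended repair --- that some \emph{integer} has $\Pref[2n]{\h(k)}\per{0}$ as its expansion --- is not available: that word is the $\beta$-adic expansion of $P_{\Pref[2n]{\h(k)}}(\beta)\in\Z[\beta]$, which is generically not a rational integer, while the infimum ranges over $j\in\Z$ only. What $b\divides a$ actually buys (Lemma~\ref{lem:pref-n-2}) is that $b^n\in\beta^{2n}+\beta^{2n+1}\Z[\beta]$, so by adding a suitable $eb^n$ with $e\in\AA$ you can prescribe the digit at position $2n$ --- but the digit at position $2n+1$ is then forced on you. You control every second digit of the continuation, not the whole tail.

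The paper closes this gap with a limiting argument: iterating the lemma yields, for each $m$, an integer $k_m$ with $\Pref[2n+2m]{\h(k_m)}\in\Pref[2n]{\h(k)}(0\AA)^m$. The uncontrolled digits sit at odd positions, where odd powers of $\beta'$ are negative, so they can only decrease $P$; the genuinely unknown tail beyond position $2n+2m$ contributes at most $(\beta')^{2n+2m}\tfrac{b-1}{1-(\beta')^2}$, which tends to $0$ as $m\to\infty$. Hence $\inf_{j\in\Z}P_{\h(j)}(\beta')\leq\inf_{m}P_{\h(k_m)}(\beta')\leq\mu_n$. Replacing your single ``append $\per{0}$'' step by this sequence of partial zero-insertions and the passage to the limit makes the argument correct and brings it in line with the paper's proof.
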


\begin{lemma}\label{lem:pref-n-2}
Let $\beta^2=cb\beta+b$.
Let $x,y\in\Z[\beta]$ satisfy that $x-y\in b^n\Z[\beta]$ for some $n\in\N$.
Then $\Pref[2n]{\h(x)}=\Pref[2n]{\h(y)}$.
Moreover, for all $x\in\Z[\beta]$ and $d\in\AA$ there exists $y\in x+b^n\AA$ such that
 $\Pref[2n\tight+1]{\h(y)}=\Pref[2n]{\h(x)}d$.
\end{lemma}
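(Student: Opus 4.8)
The plan is to exploit the special factorization of $\beta^2$ that becomes available once $b$ divides $a$. Writing $a = cb$, the relation $\beta^2 = cb\beta + b$ rewrites as $\beta^2 = b\,u$ with $u \eqdef c\beta+1$, and a one-line computation shows $N(u) = (c\beta+1)(c\beta'+1) = c^2\beta\beta' + ca + 1 = -c^2 b + c^2 b + 1 = 1$. Since moreover $u^{-1} = c\beta'+1 = c^2 b + 1 - c\beta$ lies in $\Z[\beta]$ (using $\beta' = cb - \beta$), the element $u$ is a unit of $\Z[\beta]$ itself. Hence $\beta^{2n} = b^n u^n$ with $u^n$ a unit, and I obtain the key identity $\beta^{2n}\Z[\beta] = b^n\Z[\beta]$ valid for every $n$.

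Given this, the first assertion follows exactly as in Lemma~\ref{lem:pref-n}: if $x - y \in b^n\Z[\beta] = \beta^{2n}\Z[\beta]$ and $\h(x) = u_0u_1\dotsm$, then $y - \sum_{j=0}^{2n-1} u_j\beta^j \in \beta^{2n}\Z[\beta]$, so by the uniqueness in Definition~\ref{def:H} the word $u_0\dotsm u_{2n-1}$ is also a prefix of $\h(y)$, i.e.\ $\Pref[2n]{\h(x)} = \Pref[2n]{\h(y)}$.

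For the second assertion I would take $y = x + b^n e$ with $e \in \AA$ still to be chosen. By the first part the first $2n$ digits of $\h(y)$ already coincide with those of $\h(x)$, so only the digit of index $2n$ must be steered. Writing $u_{2n}$ and $v_{2n}$ for the index-$2n$ digits of $\h(x)$ and $\h(y)$, the defining congruences give $x - \sum_{j=0}^{2n} u_j\beta^j \in \beta^{2n+1}\Z[\beta]$ and the analogue for $y$ with $v_{2n}$ in place of $u_{2n}$. Subtracting, and using $y - x = b^n e$, $\beta^{2n} = b^n u^n$ and $\beta^{2n+1}\Z[\beta] = b^n\beta\Z[\beta]$, division by $b^n$ yields $e - (v_{2n}-u_{2n})u^n \in \beta\Z[\beta]$. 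I then pass to the quotient $\Z[\beta]/\beta\Z[\beta] \cong \Z/b\Z$, the isomorphism sending $p + q\beta \mapsto p \bmod b$ and having $\AA$ as its set of representatives; there $u = 1 + c\beta$ maps to $1$, hence $u^n \mapsto 1$, so the relation collapses to $v_{2n} \equiv u_{2n} + e \pmod b$. As $e$ ranges over $\AA$ the digit $v_{2n}$ ranges over all of $\AA$, and choosing $e \equiv d - u_{2n} \pmod b$ forces $v_{2n} = d$, giving $\Pref[2n\tight+1]{\h(y)} = \Pref[2n]{\h(x)}d$.

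The heart of the argument, and the step I expect to need the most care, is establishing that $u$ is a unit of $\Z[\beta]$ (and not merely of the maximal order $\OO$), since only then does $\beta^{2n}\Z[\beta] = b^n\Z[\beta]$ hold with coefficients in $\Z[\beta]$; the explicit inverse $u^{-1} = c^2 b + 1 - c\beta$ settles this. The remaining subtlety is pure bookkeeping: carrying the unit $u^n$ through the final reduction and using $b = \beta(\beta-a) \in \beta\Z[\beta]$ to identify $\Z[\beta]/\beta\Z[\beta]$ with $\Z/b\Z$, which is exactly what aligns the digit alphabet $\AA$ with residues modulo $b$ and turns the surjectivity of $e \mapsto v_{2n}$ into the trivial shift $v_{2n} \equiv u_{2n} + e$.
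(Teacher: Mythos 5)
Your proof is correct and follows essentially the same route as the paper: both arguments rest on the key identity $\beta^{2n}\Z[\beta]=b^n\Z[\beta]$ and then steer the digit of index $2n$ by adding $eb^n$ with $e\equiv d-u_{2n}\pmod b$. The only (cosmetic) difference is that you obtain the key identity by exhibiting $1+c\beta$ as a unit of $\Z[\beta]$ with explicit inverse $1+c^2b-c\beta$, whereas the paper writes $b=\beta^2-c(1+c^2b)\beta^3+c^2\beta^4$ directly; likewise your reduction modulo $\beta\Z[\beta]$, where $u^n\mapsto1$, is just a repackaging of the paper's observation that $b^n\in\beta^{2n}+\beta^{2n+1}\Z[\beta]$.
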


\begin{proof}%[of Lemma~\ref{lem:pref-n-2}]
We have $\beta^2=b(c\beta+1)\in b\Z[\beta]$ and $b=\beta^2-c(1+c^2b)\beta^3+c^2\beta^4\in\beta^2+\beta^3\Z[\beta]\subseteq\beta^2\Z[\beta]$,
 whence $\beta^2\Z[\beta]=b\Z[\beta]$ and $\beta^{2n}\Z[\beta]=b^n\Z[\beta]$ for all $n\in\N$.
Following the lines of the proof of Lemma~\ref{lem:pref-n},
 we obtain that if $x-y\in b^n\Z[\beta]$ then $\h(x)$ and $\h(y)$ have a common prefix of length at least $2n$.

Put $\h(x)=u_0u_1\dotsm$.
Since $b^n\in\beta^{2n}+\beta^{2n+1}\Z[\beta]$,
 we have that $u_0u_1\dotsm u_{2n-1}d$ is a prefix of $\h(x+eb^n)$ for any $e\equiv d-u_{2n}\pmod b$.
\end{proof}

\begin{proof}[of Proposition~\ref{prop:estim-2}]
We follow the lines of the proof of Proposition~\ref{prop:estim} for the case $n$ even.
The lower bound is the same in both statements, therefore we only need to prove that
 $\inf_{j\in\Z} P_{\h(j)}(\beta')\leq P_{\Pref[2n]{\h(k)}}(\beta')$, 
 where $k\eqdef\arg\min_{j\in\{0,\dots,b^n-1\}} P_{\Pref[2n]{\h(j)}}(\beta')$.
For each $m\in\N$, there exists $k_m\in\Z$ such that $\Pref[2n\tight+2m]{\h(k_m)} \in \Pref[2n]{\h(k)}(0\AA)^m$
 by Lemma~\ref{lem:pref-n-2}.
Then
\[
	\inf_{j\in\Z} P_{\h(j)}(\beta')
	\leq \inf_{m\in\N} P_{\h(k_m)}(\beta')
	\leq \inf_{m\in\N} P_{\Pref[n]{\h(k)}(00)^m\per{(b{-}1)0}}(\beta')
	= P_{\Pref[n]{\h(k)}}(\beta')
.\qedhere\]
\iffiif\par\vspace*{-1.8\baselineskip}\leavevmode\fi % ONLY FIIF
\end{proof}
%\iffiif\smallskip\fi % ONLY FIIF

\begin{remark}\label{rem:J-2}
We have that
\begin{equation}\label{eq:mu-2}
	\mu_n \eqdef
	\min_{j\in\{0,1,\dots,b^n-1\}} P_{\Pref[2n]{\h(j)}}(\beta')
	= \min_{j\in J_{n-1}+b^{n-1}\AA} P_{\Pref[2n]{\h(j)}}(\beta')
,\end{equation}
 where
\[
	J_0\eqdef\{0\}
\quad\text{and}\quad
	J_n\eqdef \set[\big]{j\in J_{n-1}+b^{n-1}\AA}
		{P_{\Pref[2n]{\h(j)}}(\beta')<\mu_n+\abs{\beta'}^{2n+1}\tfrac{b-1}{1-(\beta')^2}}
.\]
To verify~\eqref{eq:mu-2}, we first show that the sequence $(\mu_n)_{n\in\N}$ is non-increasing.
Let $j\in\{0,\dots,b^n-1\}$ be such that $\mu_n=P_{\Pref[2n]\h(j)}(\beta')$.
Then by Lemma~\ref{lem:pref-n-2} there exists $d\in\AA$ such that $\Pref[2n\tight+1]{\h(j+db^n)}=\Pref[2n]{\h(j)}\mkern1mu0$,
 whence $\mu_{n+1}\leq P_{\Pref[2n+2]{\h(j+db^n)}}(\beta')\leq\mu_n$.

Suppose now that $j\in \{0,\dots,b^n-1\}\setminus(J_{n-1}+b^{n-1}\AA)$.
Then there exists $m<n$ such that $P_{\Pref[2m]{\h(j)}}(\beta')\geq \mu_m + \abs{\beta'}^{2m+1}\tfrac{b-1}{1-(\beta')^2}$,
 therefore $P_{\Pref[2n]{\h(j)}}(\beta')> \mu_m\geq \mu_n$.
\end{remark}

\begin{example}\label{ex:22}
As an example, the computation of $\gamma(\beta)$ for $\beta=1+\sqrt3$, the Pisot root of $\beta^2=2\beta+2$,
 is visualized in Figure~\ref{fig:comp}.
For each step of the algorithm, the value of $\gamma(\beta)$ lies in the left-most interval.
Already in the 5th step we obtain that $\gamma(\beta)\in[0.900834,0.970552]$, therefore it is strictly between $0$ and $1$.
Note that in the 9th step we have that $\mu_9=P_{t^{(9)}}(\beta')$ with $t^{(9)}=001100010101010001$,
 and $\gamma(\beta)\in[0.91012665225, 0.91587668314]$.
In the 40th step, we have that $t^{(40)}=
 00 11 00 (01)^4 00 01 00 (00 01)^4 (00)^2 (01)^5 (00)^3 (01)^6 (00)^2 01$
 and $\gamma(\beta)\approx0.914803044$. In the 200th step, we obtain
\[
\iffiif % FIIF ONLY BEGIN
	\gamma(1\tight+\sqrt3)\approx0.
	9148030441966581950472931393
	9379415269499861873397617573
	3141835762361
\else % FIIF ONLY END % NOT FIIF BEGIN
	\gamma(1\tight+\sqrt3)\approx0.
	9148030441966581950472931393
	\text{\small$9379415269499861873397617573$}
	\text{\footnotesize$3141835762361$}
\fi % NOT FIIF END
.\]
\end{example}

\begin{table}[t]
\centering\normalsize
\begin{tabular}[t]{*{2}{>$c<$}>$l<$}
\toprule
a & b & $\hfill$\gamma(\beta)\phantom{00\cdots}$\hfill$
\\\midrule[\heavyrulewidth]
 2 & 2 & 0.914803044196\cdots \\\midrule
 6 & 3 & 0.992963560101\cdots \\\midrule
 8 & 4 & 0.933542944675\cdots \\
12 & 4 & 0.999897789000\cdots \\\midrule
10 & 5 & 0.834150794175\cdots \\
15 & 5 & 0.995306723671\cdots \\
20 & 5 & 0.999999907110\cdots\phantom{00} \\
\bottomrule
\end{tabular}\qquad
\begin{tabular}[t]{*{2}{>$c<$}>$l<$}
\toprule
a & b & $\hfill$\gamma(\beta)\phantom{00\cdots}$\hfill$
\\\midrule[\heavyrulewidth]
12 & 6 & 0.736114178272\cdots \\
18 & 6 & 0.993897266395\cdots \\\midrule
14 & 7 & 0.584906533458\cdots \\
21 & 7 & 0.944526094618\cdots \\
28 & 7 & 0.997984788082\cdots \\
35 & 7 & 0.999986041767\cdots \\
42 & 7 & 0.99999999999971\cdots \\
\bottomrule
\end{tabular}
\iffiif\else\vspace*{1ex}\fi % NOT FIIF
\caption{Numerical values of $\gamma(\beta)$, where $\beta^2=a\beta+b$, that correspond to the first~couple~`$\s$'~in~Table~\ref{tab:star}.}
\label{tab:num}
\end{table}

\iffiif
% correct the vertical space leak caused by floats
\vspace*{-1\baselineskip}% FIIF ONLY
\fi

\begin{figure}[t]
\centering
\TIKZinclude{gammalgo-tikzcomp}%
\caption{The computation of $\gamma(1+\sqrt3)$.
 By a~thick line with a bold label we denote the intervals that we `keep' (these arise from numbers in $J_n$),
  by a~thin line the ones that we `forget'.
 The labels next to the intervals are the corresponding prefixes $\Pref[2n]{\h(j)}$.}
\label{fig:comp}
\end{figure}

\begin{proof}[of Theorem~\ref{thm:gamma0}, case $a=b$]\phantomsection\label{pf:gamma0:a=b}
Take $a=b\geq4$.
Then $b=\beta^2+(b-1)\beta^3+(2b+1)\beta^4$, therefore $\Pref[4]{\h(b)}=001(b{-}1)$.
According to Proposition~\ref{prop:estim-2}, we have that
\[
	A
	\eqdef \inf_{j\in\Z} P_{\h(j)}(\beta')
	\leq P_{001(b{-}1)}(\beta')
	= (\beta')^2+(b-1)(\beta')^3
.\]
For $a=b\geq5$, we use the estimate $-\beta'\in(\frac{b}{b+1},1)$ to obtain that $A<1-\frac{b^3(b-1)}{(b+1)^3}<-1$,
 therefore $\gamma(\beta)=0$.
For $a=b=4$, we have $P_{001(b{-}1)}(\beta')\approx -1.0193$, thus $A<-1$.

When $a=b=3$, we verify that $\Pref[12]{\h(21)}=001200020201$ and Proposition~\ref{prop:estim-2} yields
\(
	A\leq P_{001200020201}(\beta') \approx -1.0726<-1
,\)
 therefore $\gamma(\beta)=0$.

When $a=b=2$, we can follow the lines of the proof of the case $a>\frac{1+\sqrt5}{2}b$,
 because we observe that \eqref{eq:ffff} is satisfied, namely $6+8\beta'\approx0.1436>0$.
\end{proof}

The proof of Theorem~\ref{thm:abZ} is divided into several cases.

\begin{proof}[of Theorem~\ref{thm:abZ}, case $a\geq b^2$]%[of Proposition~\ref{prop:ab2-1}]
Any $j\in\Z\setminus\{0\}$ can be written as $j=b^n(j_0+j_1b)$, where $n\in\N$, $j_0\in\AA\setminus\{0\}$ and $j_1\in\Z$.
Then $\Pref[2n\tight+1]{\h(j)}=0^{2n}j_0$ becase $b^n\in\beta^{2n}+\beta^{2n+1}\Z[\beta]$,
 whence
\begin{multline*}
	P_{\h(j)}(\beta')
	\geq P_{\Pref[2n\tight+1]{\h(j)}((b-1)0)^\omega}(\beta')
	\geq P_{0^{2n}1((b-1)0)^\omega}(\beta')
\\[0.5ex]
	= (\beta')^{2n}\Bigl( 1 + \frac{(b-1)\beta'}{1-(\beta')^2} \Bigr)
	= (\beta')^{2n}\Bigl( 1 - \frac{(b-1)b\beta}{\beta^2-b^2} \Bigr)
	> 0
,\end{multline*}
 where the last inequality was already proved in \cite[Theorem~6]{minervino_steiner_2014}.
As $\h(0)=0^\omega$, we have $P_{\h(0)}(\beta')=0$.
From Theorem~\ref{thm:gamma0} we conclude that $\gamma(\beta) = 1 + \inf_{j\in\Z} P_{\h(j)}(\beta') = 1$.
\end{proof}

The remaining cases of the proof of Theorem~\ref{thm:abZ} make use of the following relations.
Let $c\eqdef a/b\in\Z$.
Then $\frac{b}{\beta^2} = \frac{1}{1+c\beta} \in 1-c\beta+c^2\beta^2-c^3\beta^3+\beta^4\Z[\beta]$,
 and more generally,
\begin{equation}\label{eq:binom}
	\frac{b^n}{\beta^{2n}} %= \sum_{i\geq0}(-1)^i\tbinom{k+i-1}{i}c^i\beta^i
	\in 1-nc\beta+\tbinom{n+1}{2} c^2\beta^2-\tbinom{n+2}{3}c^3\beta^3 + \beta^4\Z[\beta]
	\quad\text{for any $n\in\N$.}
\end{equation}
For $j=(j_0+j_1b)b^n$ with $n\in\N$, and $j_0,j_1\in\Z$ we have that $\frac{j}{\beta^{2n}}=j_0\frac{b^n}{\beta^{2n}}+j_1\beta^2\frac{b^{n+1}}{\beta^{2n+2}}$, therefore
\begin{equation}\label{eq:j-beta2m}
	\frac{j}{\beta^{2n}} \in j_0-j_0nc\beta+\Bigl(j_0\tbinom{n+1}{2}c^2+j_1\Bigr)\beta^2
	 - \Bigl(j_0\tbinom{n+2}{3}c^3+j_1(n+1)c\Bigr)\beta^3+\beta^4\Z[\beta]
.\end{equation}

\begin{proof}[of Theorem~\ref{thm:abZ}, case $\beta^2=30\beta+6$]%[of Proposition~\ref{prop:24-30}]
We have $b=6$ and $c=5$.
As in the proof of the previous case, we will show that $P_{\h(j)}(\beta')\geq0$ for all $j\in\Z$.
Let $j\neq0$ be written as $j=b^n(j_0+j_1b)$ with $j_0\in\AA\setminus\{0\}$ and $j_1\in\Z$,
 then $\h(j)=0^{2n}u_0u_1u_2\dotsm$ for some $u_0u_1\dotsm\in\AA^\omega$ with $u_0=j_0$,
 and $P_{\h(j)}(\beta') = (\beta')^{2n}P_{u_0u_1\dotsm}(\beta')$.
We consider the following cases:
\begin{itemize}
\item If $u_0\geq2$, then $P_{u_0u_1\dotsm}(\beta')\geq P_{2\per{50}}(\beta')>0$.
\item If $u_0=1$ and $u_1\leq4$, then $P_{u_0u_1\dotsm}(\beta')\geq P_{14\per{05}}(\beta')>0$.
\item If $u_0u_1=15$,
 then~\eqref{eq:j-beta2m} yields that $j_0=1$ and $-j_0nc\equiv 5\pmod 6$, therefore $n\equiv-1\pmod 6$ and $n=6n_1-1$, i.e.,
 $-j_0nc\beta=5\beta-30n_1\beta\in 5\beta-5n_1\beta^3+\beta^4\Z[\beta]$.
Therefore
\[
	\frac{j}{\beta^{2n}} \in 1+5\beta + \Bigl(\tbinom{6n_1}{2}5^2+j_1\Bigr)\beta^2
	 - \Bigl(\tfrac{(6n_1+1)6n_1(6n_1-1)}{6}5^3+30n_1j_1+5n_1\Bigr)\beta^3
	 + \beta^4\Z[\beta]
.\]
The coefficient of $\beta^3$ is congruent to $0$ modulo $6$ regardless of the values of $n_1$ and $j_1$.
This means that $u_3=0$.
Then $P_{15u_20\per{05}}(\beta')\geq P_{1500\per{05}}(\beta')>0$.
%\qedhere
\end{itemize}
Therefore we have $P_{\h(j)}(\beta')\geq0$ for all $j\in\Z$.
\end{proof}

\begin{proof}[of Theorem~\ref{thm:abZ}, case $\beta^2=24\beta+6$]%[of Proposition~\ref{prop:24-30}]
We have $b=6$ and $c=4$.
We use the same technique as in the case $\beta^2=30\beta+6$.
\begin{itemize}
\item If $u_0\geq2$, then $P_{u_0u_1\cdots}(\beta')\geq P_{2\per{50}}(\beta')>0$.
\item If $u_0=1$ and $u_1\leq3$, then $P_{u_0u_1\cdots}(\beta')\geq P_{13\per{05}}(\beta')>0$.
\item Since $c$ is even, we get that $u_1\equiv -j_0nc\pmod{6}$ is even, therefore $u_0u_1\neq15$.
\item If $u_0u_1=14$, then \eqref{eq:j-beta2m} gives $j_0=1$ and $-j_0nc\equiv 4\pmod6$, i.e., $n\equiv -1\pmod3$ and $n=3n_1-1$,
 whence $-j_0nc\beta=4\beta-12n_1\beta\in 4\beta-2n_1\beta^3+\beta^4\Z[\beta]$.
We derive that
\[
	\frac{j}{\beta^{2n}} \in 1+4\beta+(\text{some\ integer})\beta^2-\bigl(144n_1^3-30n_1+12n_1j_1\bigr)\beta^3+\beta^4\Z[\beta]
.\]
As above, we get that $u_3=0$ regardless of the values of $n_1$ and $j_1$,
thus $P_{u_0u_1\cdots}(\beta') \geq P_{1400\per{05}}(\beta')>0$.
\qedhere
\end{itemize}
\end{proof}

\begin{proof}[of Theorem~\ref{thm:abZ}, case $c\eqdef a/b<b$ and $c\notin\{4,5\}$ when $b=6$]
Let $n \eqdef \lceil \frac{c}{b-c} \rceil$.
From \eqref{eq:binom}, the $\beta$-adic expansion $\h(b^n)$ starts with $0^{2n}1(nb{-}nc)$.
If $\frac{c}{b-c} \notin \mathbb{Z}$, then we have $nb-nc > c$ and thus $P_{1(nb{-}nc)}(\beta') \leq 1 + (c+1) \beta' < 0$,
 using that $\beta' = -\frac{b}{\beta} < -\frac{b}{cb+1} \leq -\frac{1}{c+1}$. 
By Proposition~\ref{prop:estim-2}, this proves that $\gamma(\beta) < 1$ if $c$ is not a~multiple of $b-c$.

Assume now that $\frac{c}{b-c} \in \mathbb{Z}$, i.e., $n = \frac{c}{b-c}$.
For $j\eqdef b^n-\tbinom{n+1}{2}c^2b^{n+1}$, we have by~\eqref{eq:j-beta2m} that
\[
	\frac{j}{\beta^{2n}} \in 1-nc\beta-\Bigl( \tbinom{n+2}{3} c^3-\tbinom{n+1}{2}c^3(n+1)\Bigr)\beta^3 + \beta^4 \mathbb{Z}[\beta]
.\]
Since $-nc=c-nb\in c-n\beta^2+\beta^3\Z[\beta]$ and $(n+1)c=nb\in\beta\Z[\beta]$, we obtain that
\[
	\frac{j}{\beta^{2n}} \in 1+c\beta-\Bigl( \tbinom{n+2}{3} c^3+n\Bigr)\beta^3 + \beta^4 \mathbb{Z}[\beta]
.\]
If $\tbinom{n+2}{3} c^3+n\not\equiv0\pmod b$, then
\[
	P_{\Pref[2n\tight+4]{\h(j)}}(\beta') 
	\leq P_{0^{2n}1c01}(\beta')
%	= (\beta')^{2n}\bigl(1 + c \beta' + (\beta')^3\bigr)
	= \frac{(\beta')^{2n+2}}{b} + (\beta')^{2n+3} = (\beta')^{2n+2} \frac{\beta-b^2}{b\beta} < 0
,\]
 since $1+c\beta'=\frac{(\beta')^2}b$ and $\beta < a + 1 \le b^2$,
 therefore $\gamma(\beta)<1$ by Proposition~\ref{prop:estim-2}.

It remains to consider the case that $\binom{n+2}{3} c^3 + n \equiv 0 \bmod b$, i.e.,
\[
	n \equiv - \frac{bn(n+2)}{6} c^2 n \bmod b
,\]
because $(n+1)c=nb$.
Multiplying by $b-c$ gives
\[
	c \equiv - \frac{bn(n+2)}{6} c^3 \bmod b
.\]
Note that $\frac{bn(n+2)}{6} = (b-c) \binom{n+2}{3} \in \mathbb{Z}$. 
We distinguish four cases:
\begin{enumerate}

\item
 If $6 \perp b$, then $c \equiv 0 \bmod b$, contradicting that $1 \le c < b$. 

\item
 If $2 \divides b$ and $3 \ndivides b$, then $c$ is a~multiple of~$b/2$, i.e., $c = b/2$, $n = 1$. 
 As $n$ is also a~multiple of~$b/2$, we get that $b = 2$, thus $c = 1$. 
 For $\beta^2 = 2\beta + 2$, we already know that $\gamma(\beta) < 1$, see Example~\ref{ex:22}.

\item
 If $3 \divides b$ and $2 \ndivides b$, then $c$ and $n$ are multiples of~$b/3$.
 For $c=b/3$ we have $n\notin\Z$.
 For $c = 2b/3$, we have $n = 2$,
  thus $b\in\{3,6\}$.
 However, $b=6$ contradicts $2 \ndivides b$
 and $b=3$ (i.e., $c = 2$) contradicts $\binom{n+2}{3} c^3 + n \equiv 0 \bmod b$.

\item 
 If $6 \divides b$, then $c$ and $n$ are multiples of~$b/6$,
  thus $c \in \{b/2, 2b/3, 5b/6\}$, $n \in \{1, 2, 5\}$. 
 If $n = 1$, then $b = 6$, thus $c = 3$, and $\binom{n+2}{3} c^3 + n \not\equiv 0 \bmod b$.
 If $n = 2$, then $b \in \{6, 12\}$; we have excluded that $b = 6$, $c = 4$; for $b = 12$, $c = 8$,
  we have $\binom{n+2}{3} c^3 + n \not\equiv 0 \bmod b$.
 If $n = 5$, then $b \in \{6, 30\}$; we have excluded that $b = 6$, $c = 5$; for $b = 30$, $c = 24$,
  we have $\binom{n+2}{3} c^3 + n \not\equiv 0 \bmod b$.
\qedhere

\end{enumerate}
\iffiif
\vspace*{-\baselineskip} % ONLY FIIF
\fi
\end{proof}

\section{The general case}\label{sect:general}

In the general quadratic case where $1<\gcd(a,b)<b$,
 the conditions of Theorem~\ref{thm:gamma0} need not be satisfied.
This means that we have to rely on the more general Theorem~\ref{thm:gamma-bar}, i.e.,
 to compute the two values $\inf_{j\in\Z} P_{\h(j)}(\beta')$ and
 $\sup_{j\in\Z} P_{\h(j-\beta)}(\beta')$.

We can derive, in a similar manner to Proposition~\ref{prop:estim}, that for all $n\in\N$,
\begin{equation}\label{eq:estim-beta}
	\sup_{j\in\Z} P_{\h(j-\beta)}(\beta')
	\in
	\max_{j\in\{0,1,\dots,b^n-1\}} P_{\Pref[n]{\h(j-\beta)}}(\beta')
	+ (\beta')^n\tfrac{b-1}{1-(\beta')^2}[\beta',1]
.\end{equation}

Let now $s_{n}\geq1$, for $n\in\N$, denote the smallest positive integer such that $s_{n}\in\beta^n\Z[\beta]$,
 and $r_n\eqdef \frac{s_n}{s_{n-1}}$.
Then $x,y\in\Z$ have a common prefix of length $n$ if and only if $y-x\in s_{n}\Z$.
Therefore, in both \eqref{eq:estim} and \eqref{eq:estim-beta} we can take $\{0,1,\dots,s_{n}-1\}$ instead of $\{0,1,\dots,b^n-1\}$.
Moreover, following Remark~\ref{rem:J-2}, we can further restrict to the sets
\begin{align*}
	J_0 & \eqdef\{0\}
,&
	J_n & \eqdef \set[\big]{ j\in J_{n-1}+s_{n-1}\{0,\dots,r_n-1\} }
		{ P_{\Pref[n]{\h(j)}}(\beta')\leq \mu_n+\abs{\beta'}^{n}\tfrac{b-1}{1+\beta'} }
,\\
	J_0' & \eqdef\{-\beta\}
,&
	J_n' & \eqdef \set[\big]{ j\in J_{n-1}+s_{n-1}\{0,\dots,r_n-1\} }
		{ P_{\Pref[n]{\h(j)}}(\beta')\geq \nu_n-\abs{\beta'}^{n}\tfrac{b-1}{1+\beta'} }
,\end{align*}
 where $\mu_n\eqdef\min_{j\in\{0,1,\dots,b^n-1\}} P_{\Pref[n]{\h(j)}}(\beta')$ and $\nu_n\eqdef\max_{j\in\{0,1,\dots,b^n-1\}} P_{\Pref[n]{\h(j-\beta)}}(\beta')$.

\bigskip

We conclude by several open questions that arise in the study of rational numbers with purely periodic expansions:
\begin{enumerate}\Alphenumi

\item
Prove or disprove that $\gamma(\beta)=1$ for quadratic Pisot number $\beta>1$, a root of $\beta^2=a\beta+b$, if and only if
 $\frac ab\in\Z$ and $a\geq b^2$ or $(a,b)\in\{(24,6),(30,6)\}$.

\item
For which quadratic $\beta$ we have that $\gamma(\beta)=0$?
Can we drop the restrictions on $a$ and $b$ in Theorem~\ref{thm:gamma0}?
More specifically, is it true that $a<\frac{1+\sqrt5}{2}b$ implies $\gamma(\beta)=0$?

\item
What is the structure of the prefixes of $\beta$-adic expansions of integers for a general quadratic~$\beta$?

\item
What about the cubic Pisot case?
Akiyama and Scheicher~\cite{akiyama_scheicher_2005} showed how to compute the value $\gamma(\beta)$ for
 $\beta\approx1.325$ the minimal Pisot number (or Plastic number), root of $\beta^3=\beta+1$.
Loridant et al.~\cite{LMST_2013} gave the contact graph of the $\beta$-tiles for cubic units,
 which could be used to determine $\gamma(\beta)$ for the units, in a similar way to what Akiyama and Scheicher did.
The consideration of the $\beta$-adic spaces could then allow the results to be expanded to non-units as well.

\end{enumerate}

\iffiif\else % NOT FIIF BEGIN
\section*{Acknowledgements}

\begingroup\footnotesize
The first author acknowledges support by Grant Agency of the Czech Technical University in Prague grant SGS11/162/OHK4/3T/14
 and Czech Science Foundation grant 13-03538S.
The second author acknowledges support by ANR/FWF project ``FAN -- Fractals and Numeration'' (ANR-12-IS01-0002, FWF grant I1136)
 and by ANR project ``Dyna3S -- Dynamique des algorithmes du pgcd'' (ANR-13-BS02-0003).

\endgroup
\fi % NOT FIIF END

\bibliographystyle{amsalpha}
\bibliography{biblio}

\iffiif
\hfuzz10pt %ONLY FIIF
\fi

\iffiif % ONLY FIIF BEGIN
\overfullrule 0pt
\affiliationone{% in this example, two authors share an institution
   Tom\'a\v s Hejda\\
   Department of Mathematics
   FNSPE\\
   Czech Technical University in Prague\\
   Trojanova 13, Prague 12000\\
   Czech Republic\\[\medskipamount]
   LIAFA, CNRS UMR 7089\\
   Universit\'e Paris Diderot -- Paris 7\\Case 7014, 75205 Paris Cedex 13\\
   France
   \email{tohecz@gmail.com}
}%
% Important: Do not put any empty line here.
\affiliationtwo{% in this example, one author has two addresses}
   Wolfgang Steiner\\
   LIAFA, CNRS UMR 7089\\
   Universit\'e Paris Diderot -- Paris 7\\Case 7014, 75205 Paris Cedex 13\\
   France
   \email{steiner@liafa.univ-paris-diderot.fr}}%
% Important: Do not put any empty line here.
% Use \affiliationthree{} for any address positioned under \affiliationone
% Use \affiliationfour{}  for any address positioned under \affiliationtwo
\affiliationthree{~}%inserts a space to make this field empty
\affiliationfour{~}%
\fi % ONLY FIIF END

\end{document}